\newtheorem{proposition}{Proposition}[section]
\newtheorem{theorem}{Theorem}[section]
\newtheorem{lemma}{Lemma}[section]
\newtheorem{definition}{Definition}[section] 
\let\olddefinition\definition
\renewcommand{\definition}{\olddefinition\normalfont}
\newtheorem{remark}{Remark}[section]
\let\oldremark\remark
\renewcommand{\remark}{\oldremark\normalfont}
\let\oldquestion\question
\renewcommand{\question}{\oldquestion\normalfont}
\newtheorem{notation}{Notation}[section]
\let\oldnotation\notation
\renewcommand{\notation}{\oldnotation\normalfont}
\newtheorem{example}{Example}[section]
\let\oldexample\example
\renewcommand{\example}{\oldexample\normalfont}
\let\oldmethod\method
\renewcommand{\method}{\oldmethod\normalfont}
\newtheorem{strategy}{Strategy}[section]
\let\oldstrategy\strategy
\renewcommand{\strategy}{\oldstrategy\normalfont}
\title{The Linking-Unlinking Game}
\author{Adam Giambrone\\ \small{Elmira College}\\ \small{Elmira, NY} \and Jake Murphy\\ \small{Louisiana State University}\\ \small{Baton Rouge, LA}}
\date{\today}
\begin{document}

\maketitle

\begin{abstract}
Combinatorial two-player games have recently been applied to knot theory. Examples of this include the Knotting-Unknotting Game and the Region Unknotting Game, both of which are played on knot shadows. These are turn-based games played by two players, where each player has a separate goal to achieve in order to win the game. In this paper, we introduce the Linking-Unlinking Game which is played on two-component link shadows. We then present winning strategies for the Linking-Unlinking Game played on all shadows of two-component rational tangle closures and played on a large family of general two-component link shadows. 
\end{abstract}

\tableofcontents 

\section{Introduction}

Recently, a number of researchers have applied game theory to knot theory in the form of combinatorial games played on knot diagrams. Examples of such games include Twist Untangle (\cite{twist1}), the Knotting-Unknotting Game (\cite{KU}, \cite{Sums}), and the Region Unknotting Game (\cite{RU}). The game Twist Untangle is played between two people on a nontrivial diagram of the unknot formed by iteratively twisting the unknotted circle. Players take turns using either an R1 move or an R2 move (see Figure~\ref{RMoves}) to decrease the number of crossings and simplify the diagram. The winner is the player that reduces the diagram to the unknotted circle. For more details, see \cite{twist1}. The Knotting-Unknotting Game is played on a \textbf{shadow} of a knot (see Definition~\ref{linkshadow}). Players take turns \textbf{resolving} a crossing (see Definition~\ref{resolvedef}) until all crossings are resolved and a knot diagram is formed. One player, the Unknotter, wins if the resulting knot diagram is unknotted (represents the trivial knot) while the other player, the Knotter, wins if the resulting knot diagram is knotted (represents a nontrivial knot). For more details, see \cite{KU} and \cite{Sums}. The Region Unknotting Game is similar to the Knotting-Unknotting Game in that the goals of the Knotter and Unknotter remain the same. The key difference is that play now consists of resolving the set of crossings that are incident to a face of the knot shadow (or changing the crossing type of a crossing if it has already been resolved). For more details, see \cite{RU}.

In this paper, we will adapt the Knotting-Unknotting Game to be played on two-component link shadows. We call this new game the \textbf{Linking-Unlinking Game}. Here, players still take turns resolving crossings. One player, the \textbf{Unlinker}, wins if the resulting two-component link diagram is \textbf{splittable} (is equivalent to a two-component link diagram where the components are separated from each other) while the other player, the \textbf{Linker}, wins if the resulting two-component link diagram is \textbf{unsplittable} (is not splittable). 

Our main goal in this paper is to find winning strategies for playing the Linking-Unlinking Game on families of two-component link shadows. The first family of link shadows we explore are the shadows of the two-component links that arise as a closure of the rational tangle $(a_1, \ldots, a_n)$. Our first three main results are combined into the following theorem. 

\begin{theorem} \label{IntroThm1}
Suppose we have a shadow of a rational two-component link coming from a closure of the rational tangle $(a_1,\ldots,a_n)$. 
\begin{enumerate}
\item[(1)] If $a_{2k+1} = 0$ for all $k$, then the Unlinker wins. 
\item[(2)] If either 
\begin{enumerate}
\item[(a)] $a_{2k+1}\neq 0$ for at least one $k$ and all of the $a_i$ are even, 
\item[(b)] $n=2$ and both $a_1$ and $a_2$ are odd, or
\item[(c)] $n\geq3$, both $a_1$ and $a_n$ are odd, and all other $a_i$ are even, 
\end{enumerate}
then the second player has a winning strategy (regardless of their role).
\end{enumerate}
\end{theorem}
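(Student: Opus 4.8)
The plan is to treat the two parts separately, after setting up two preliminary facts. First, a two-component link diagram whose two components have nonzero linking number is unsplittable; this will be the Linker's basic tool. Second, in a closure of $(a_1,\dots,a_n)$ the crossings are partitioned into the $n$ twist regions, and each twist region carries a single pair of strands, so every region is either entirely a region of crossings \emph{between} the two components (call it an inter-component region) or entirely a region of self-crossings --- and which of the two it is depends only on the shadow, not on how the crossings are eventually resolved. I would also record the elementary reduction move: two consecutive crossings of a twist region that are resolved to undo one another bound an embedded bigon, so the pair can be removed by an R2 move without changing the link type or the linking number; iterating this empties any even-length twist region.

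For part (1), I would show that ``$a_{2k+1}=0$ for all $k$'' forces the closure to contain no inter-component region at all. Collapsing the zero entries of the continued fraction --- a short induction on $n$ --- reduces the diagram to one in which the two components lie in disjoint disks of the plane, and resolving self-crossings cannot change that. Hence every terminal position of the game is a split link, so the Unlinker wins regardless of who moves first: there is nothing to play. The only real content here is the structural collapse of the closure.

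For part (2), the unifying point is that in each of (a), (b), (c) the total number of crossings $N=\sum a_i$ is even (a sum of even numbers in (a); $\mathrm{odd}+\mathrm{odd}$ in (b); $\mathrm{odd}+\mathrm{even}+\dots+\mathrm{even}+\mathrm{odd}$ in (c)), so the second player always makes the last move, and I would produce a winning strategy for each of the two roles. \emph{Unlinker as second player:} fix the perfect matching on crossings that pairs up consecutive crossings within each even-length twist region and, in cases (b) and (c), pairs the one leftover crossing of region $a_1$ with the one leftover crossing of region $a_n$; respond to every opponent move by resolving the partner, so that each consecutive pair is made to cancel and the leftover pair (when present) is resolved with opposite crossing signs. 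After all the R2 bigons are deleted, the diagram becomes the all-zero closure in case (a) --- which, the link being two-component, is the two-component unlink --- and in cases (b), (c) it reduces to a two-component diagram with at most two crossings and linking number $0$, which is the unlink. Either way the result is split. \emph{Linker as second player:} the goal is now a nonzero linking number. Here I would use the identification of the inter-component regions --- this is exactly where the case hypotheses enter, giving that $a_1$ and $a_n$ are inter-component regions in (b), (c), and that in (a) the hypothesis ``$a_{2k+1}\neq0$ for some $k$'' forces at least one inter-component region --- together with the fact that the inter-component crossings are even in number. The Linker answers a move on an inter-component crossing with a move on an inter-component crossing, and a move on a self-crossing with a move on a self-crossing; a parity count shows this is always possible and that the Linker is the player who resolves the final inter-component crossing. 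The Linker then chooses that crossing's sign so that the signed total of inter-component crossings is nonzero (one of $S\pm1$ is nonzero, where $S$ is the running total), forcing a nonzero linking number and an unsplittable link. In every case the declared winner has the last word.

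The step I expect to be the main obstacle is the combinatorial bookkeeping behind all of this: establishing the inter-component/self-crossing dichotomy and then, from the parities of the $a_i$ and of $n$, pinning down exactly which twist regions are inter-component (so that the hypotheses of (a), (b), (c) really do the work claimed); verifying that the successive R2 deletions are legitimate and leave precisely the all-zero or two-crossing closure; and confirming, in the Linker's argument, that the number of unresolved inter-component crossings remains even throughout the play, so that it is the Linker --- not the opponent --- who fixes the last one. Each of the three sub-cases will need its own short verification, but I would not expect any of them to be deep.
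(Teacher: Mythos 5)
Your proposal is correct, and it splits naturally into a part that mirrors the paper and a part that genuinely departs from it. Part~(1) and the Unlinker-as-second-player strategy are essentially the paper's own argument (Theorems~\ref{Thm1}--\ref{Thm3}): a pairing strategy that cancels even twist regions by R2 moves and pairs the two odd leftovers of $a_1$ and $a_n$; the only difference is cosmetic, in that the paper closes the endgame by reducing the tangle word to $(0)$ via Proposition~\ref{tangle eq}, while you invoke the classification of $2$-crossing, all-NSI diagrams (linking number $0$ iff split) --- a small claim you should still justify with a sentence, since linking number $0$ does not imply splittable in general. Your Linker-as-second-player strategy, however, is a genuinely different route: instead of the paper's explicit reduction to the Hopf link $(\pm 2)$ (anti-R2 on the last two crossings of a designated odd syllable in case~(a), matched overcrossing slopes on $a_1,a_n$ in cases~(b),(c)), you pair NSIs with NSIs and SIs with SIs and choose the sign of the \emph{last} NSI so the signed sum $S\pm 1$ is nonzero. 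This is exactly the pseudo-linking-number argument the paper itself uses later for arbitrary $2$-component shadows (Theorem~\ref{thm6}), so your approach is more uniform and needs less case analysis; the parity bookkeeping you flag as the main obstacle is in fact immediate, since the number of NSIs is even by the Jordan-curve argument (Proposition~\ref{evenNSI}) and the total crossing number is even in each of (a),(b),(c), so the number of SIs is even and the Linker always faces an even count of each type before the opponent moves, guaranteeing the Linker resolves the final NSI. What the paper's more laborious route buys is reusability: its syllable-by-syllable control (reducing each non-final NSI string to $(0)$ regardless of role) is precisely what is needed in Theorem~\ref{Thm4} to absorb the interleaved SI syllables, which a global linking-number strategy does not localize.
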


To extend the above results to all shadows of two-component rational tangle closures, we utilize a decomposition of the syllables of the tangle word $(a_1,\ldots,a_n)$ into syllables consisting of self-intersections and strings of syllables consisting of non-self-intersections (see Definition~\ref{tangleNSI} and Proposition~\ref{NSIs}). By combining this decomposition with the proof of Theorem~\ref{IntroThm1}, we are able to prove the following result. 

\begin{theorem} \label{IntroThm2}
Suppose we have a shadow of a rational two-component link coming from a closure of a rational tangle. 
\begin{enumerate}
\item[(1)] If the tangle word contains an even number of self-intersections, then the second player has a winning strategy (regardless of their role).
\item[(2)] If the tangle word contains an odd number of self-intersections, then the first player has a winning strategy (regardless of their role).
\end{enumerate} 
\end{theorem}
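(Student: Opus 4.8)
The plan is to reduce the general case to the cases already covered by Theorem~\ref{IntroThm1} by means of the decomposition of the tangle word into self-intersection syllables and maximal strings of non-self-intersection syllables provided by Definition~\ref{tangleNSI} and Proposition~\ref{NSIs}. The key observation driving the argument is that the parity of the number of self-intersection crossings in the shadow is precisely what governs who wins, and that this parity interacts multiplicatively (in the sense of disjunctive game sums) with the behavior of the non-self-intersection blocks, each of which is, by Theorem~\ref{IntroThm1}, a second-player win.

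First I would make precise how a general two-component rational tangle closure shadow decomposes. Using Proposition~\ref{NSIs}, I would write the crossing set of the shadow as a disjoint union of two kinds of pieces: the individual self-intersection crossings (crossings where a component crosses itself), and the sub-shadows coming from the maximal strings of non-self-intersection syllables. Resolving a self-intersection crossing never changes whether the two components are linked — it only modifies one component's self-diagram — so I would argue that, from the standpoint of the splittable/unsplittable outcome, a self-intersection crossing behaves like a ``neutral'' move: whoever is forced to play there simply passes the real decision to the opponent. Thus each self-intersection crossing acts as a single-move parity token.

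Next I would formalize the game-sum structure. The Linking-Unlinking Game on the whole shadow is the disjunctive sum of the games on the non-self-intersection blocks together with $m$ copies of the trivial one-move ``token'' game, where $m$ is the number of self-intersection crossings. By the portion of Theorem~\ref{IntroThm1} establishing second-player wins on non-self-intersection configurations (cases (2a)--(2c), after checking via the syllable analysis that each maximal non-self-intersection string falls into one of those cases), each block is a second-player win regardless of role; equivalently, in this sum a player who is handed any block as the only remaining live component loses. The only remaining parity comes from the $m$ tokens. If $m$ is even, the second player can mirror: respond to a token with a token, and respond within a block using that block's second-player strategy, so the second player always makes the last meaningful move and wins. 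If $m$ is odd, the first player spends one move on a token to flip the parity, thereby handing the second player an even-token position with all blocks intact, and then plays the even-$m$ strategy as the ``second player'' of the residual game; hence the first player wins. This yields (1) and (2) respectively.

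The main obstacle I anticipate is the claim that a self-intersection crossing is genuinely neutral for the splittability outcome and, more delicately, that resolving crossings inside one non-self-intersection block cannot be ``undone'' or interfere with the linking contributed by another block — i.e., that the outcome really is the disjunctive sum I have posited rather than something with cross-block interaction. Handling this requires a careful argument that the linking number (or the splittability) of the final diagram is determined blockwise by how each non-self-intersection string is resolved, using the structure of rational tangle closures; I would lean on the explicit description of linking in terms of the $a_i$ developed in the proof of Theorem~\ref{IntroThm1}, extended through the decomposition of Proposition~\ref{NSIs}. A secondary technical point is verifying that every maximal non-self-intersection string, once isolated, presents as a configuration to which Theorem~\ref{IntroThm1}(2) applies (possibly after a normalization of its tangle word), so that the ``each block is a second-player win'' input is legitimate.
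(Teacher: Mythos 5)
Your skeleton (pair up the SI crossings, defer to the Theorem~\ref{IntroThm1} strategies inside the NSI blocks, and in the odd case burn one SI move to flip parity) is the same as the paper's, but the two points you yourself flag as ``obstacles'' are precisely where the proof lives, and your proposal does not close them. First, the assertion that a self-intersection is ``neutral'' for splittability is false in general: changing the single SI of the Whitehead link diagram (Figure~\ref{WH}) turns an unsplittable diagram into a splittable one, so SIs can absolutely affect the outcome even though they never affect the linking number. The paper does not treat SIs as neutral; it arranges for them to become removable. Concretely, the second player plays the \emph{Unlinker-second} strategy (from Theorems~\ref{Thm1}--\ref{Thm3}) on every non-final string of NSI syllables, regardless of that player's actual role, so that each such string reduces to the subword $(0)$; only then does Statement~3 of Proposition~\ref{tangle eq} let the adjacent isolated SI syllable be untwisted away, and the word collapses via Statement~2. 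The role-dependent strategy is used only on the final NSI string, and if that final string is followed by a trailing SI syllable, the Linker's win is salvaged by a linking-number argument (the trailing $a^{*}$ contributes nothing to the linking number of $(\pm 2, a^{*})$). Your instruction to ``respond within a block using that block's second-player strategy'' misses this asymmetry: a Linker who plays the Linker-second strategy in \emph{every} block produces several clasps whose signs are chosen by the opponent, and the resulting linking numbers can cancel, so no conclusion about unsplittability follows.

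Second, the disjunctive-sum framing is not a valid reduction here. The Linking-Unlinking Game is not a normal-play game decided by who makes the last move; it is decided by a global property (splittability) of the final diagram, so ``the second player always makes the last meaningful move and wins'' is not a legitimate inference, and the blocks genuinely interact through the tangle word (an SI syllable between two blocks can only be eliminated because its preceding block was steered to $(0)$). What replaces the sum argument in the paper is the explicit word-reduction bookkeeping above, together with Proposition~\ref{ratevenNSI} and Proposition~\ref{NSIs} to certify that every NSI string (including the final one, since a two-component closure forces an even number of NSIs) is of one of the three types handled by Theorems~\ref{Thm1}--\ref{Thm3}, and the observation that each string has an even number of crossings so the pairing never breaks down. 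You name both of these verifications as anticipated difficulties but do not carry them out, so as written the proposal has a genuine gap rather than an alternative proof.
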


Finally, we conclude the paper by expanding our focus to general two-component link shadows, using a decomposition of the crossings of a link shadow into self-intersections and non-self-intersections (see Definition~\ref{self-intersection}) and using linking number arguments to prove the following result. 

\begin{theorem} \label{IntroThm3}
Suppose we have a shadow of a general two-component link. 
\begin{enumerate}
\item[(1)] If the shadow contains zero non-self-intersections, then the Unlinker wins. 
\item[(2)] If the shadow contains a nonzero number of non-self-intersections and an even number of self-intersections, then the Linker has a winning strategy when playing second.
\item[(3)] If the shadow contains a nonzero number of non-self-intersections and an odd number of self-intersections, then the Linker has a winning strategy when playing first.
\end{enumerate} 
\end{theorem}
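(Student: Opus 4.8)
The plan is to reduce everything to control of the linking number. I would begin by recording two facts. First, in any shadow of a two-component link the two component curves are immersed circles in $S^2$ whose mod-$2$ intersection number vanishes, so the number of non-self-intersections is automatically even; write it as $2m$. Second, if some resolution of the shadow yields a diagram with nonzero linking number -- equivalently, the signed count of the resolved non-self-intersections is nonzero (whether this holds does not depend on the chosen orientations) -- then that link is unsplittable, because a splittable link has vanishing pairwise linking number. I also note the elementary point that resolving a non-self-intersection contributes $+1$ or $-1$ to twice the linking number and the resolving player gets to choose which, whereas resolving a self-intersection has no effect on the linking number.

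For Part (1): if the shadow has no non-self-intersections, then after any resolution the curves of $K_1$ and $K_2$ are disjoint in $S^2$. Since the curve of $K_1$ is a connected plane graph, each of its complementary faces is an open disk, and the curve of $K_2$ lies inside one such face $\Delta$. Thickening $\Delta$ to a ball in $S^3$ that contains $K_2$ and misses $K_1$ and then shrinking it slightly produces a splitting sphere; hence the link is splittable and the Unlinker wins however play proceeds.

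For Parts (2) and (3): I would give the Linker an explicit strategy whose point is the following ``last-move'' claim. If the Linker is the player who resolves the last non-self-intersection to be resolved in the game, then the Linker wins: at that moment the $2m-1$ already-resolved non-self-intersections have an odd, hence nonzero, signed total $\sigma$, and by resolving the final crossing with the sign of $\sigma$ the Linker makes the signed total $\sigma\pm1$, of absolute value $|\sigma|+1\ge2$, so the final linking number is nonzero. To force this, the Linker uses a fixed pairing of crossings and mirrors. In Part (2) the number $s$ of self-intersections is even and the Linker moves second; since $s$ and $2m$ are both even, one can partition all $s+2m$ crossings into pairs so that each non-self-intersection is paired with another non-self-intersection, and the Linker simply answers each Unlinker move by resolving the partner of the crossing just played. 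In Part (3), $s$ is odd and the Linker moves first; the Linker's opening move resolves any one self-intersection (there is one, since $s\ge1$), after which $s-1$ self-intersections and $2m$ non-self-intersections remain with the Unlinker on move, and the Linker applies the Part~(2) mirroring strategy to this remaining even-sized game.

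The one point that needs genuine care -- and the main obstacle -- is verifying that the mirroring strategy behaves as advertised: that the partner crossing is always still unresolved at the moment the Linker is supposed to resolve it (so the strategy is legal), and that, because non-self-intersections are paired only with non-self-intersections, each such pair is split with the Unlinker resolving one member and the Linker the other immediately afterward, so that the globally last non-self-intersection is indeed resolved by the Linker. Once that bookkeeping is in place, the two linking-number facts close out Parts (2) and (3), and Part (1) follows from the splitting-sphere observation.
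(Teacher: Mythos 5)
Your proposal is correct and follows essentially the same route as the paper: the splitness observation for Part (1), a mirroring strategy that pairs self-intersections with self-intersections and non-self-intersections with non-self-intersections (legal by the evenness of both counts), the opening move on a self-intersection to reduce Part (3) to Part (2), and the nonzero-linking-number obstruction to splittability. The only real variation is tactical: the paper has the Linker maintain a pseudo-linking number of $\pm 1$ throughout (same sign on the first NSI response, opposite signs thereafter), whereas you let the intermediate NSI signs be arbitrary and close the argument at the last NSI via the parity observation that $2m-1$ signs sum to an odd, hence nonzero, value which the final resolution can push further from zero --- both versions are valid.
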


The remainder of this paper is organized as follows. In Section~\ref{background}, we present background from knot theory, defining terminology and providing results that will be used later. Specifically, we begin by introducing knots and links, various types of knot and link projections, and notions of equivalence for these objects in Section~\ref{knots}. In Section~\ref{NSILN}, we define splittable and unsplittable link diagrams, self-intersections and non-self-intersections for link diagrams, and the linking number of a two-component link diagram. In Section~\ref{ratlink}, we define rational tangles and rational link diagrams. In Section~\ref{ratlinkprops}, we define self-intersections and non-self-intersections for rational tangles, determine exactly when a rational tangle will close to form a two-component link diagram, and provide a decomposition theorem for rational tangle words. 

In Section~\ref{LUgame}, we define the Linking-Unlinking Game and present winning strategies for playing the game on various two-component link shadows. Specifically, we define the Linking-Unlinking Game and present two key player strategies in Section~\ref{LUdef}, we present winning strategies for all shadows of two-component rational tangle closures in Section~\ref{stratsratlink}, and we present winning strategies for large families of general two-component link shadows in Section~\ref{GeneralLinks}.

\section{Definitions and Background} \label{background}

To begin, we introduce some basic ideas from knot theory. Many of the definitions that follow can be found in introductory knot theory textbooks such as \textit{An Interactive Introduction to Knot Theory} by Johnson and Henrich (\cite{KT}) and \textit{The Knot Book: An Elementary Introduction to the Mathematical Theory of Knots} by Colin Adams (\cite{KnotBook}).

\subsection{Knots, Knot Projections, and Knot Equivalence} \label{knots}

A mathematical knot is much like the everyday knot we tie in our shoelaces. The key difference is that mathematical knots form closed loops.

\begin{definition}\label{knot}
A \textbf{knot} is a piecewise-linear simple closed curve in three-dimensional space $\mathbb{R}^3$. Equivalently, a \textbf{knot} is a smooth embedding of a circle into $\mathbb{R}^3$. 
\end{definition}

Returning to our shoelace analogy, if we fused the loose ends of a shoelace together, we would have a model of a mathematical knot. Figure~\ref{exknot} provides some examples of knots. Note that, if we consider the piecewise-linear definition of a knot, we can assume that knots are made up of a very large number of line segments so that the strands of the knot appear smooth. We now shift our attention to links, which consist of knots.  

\begin{figure}
    \centering
    \includegraphics[width=5in]{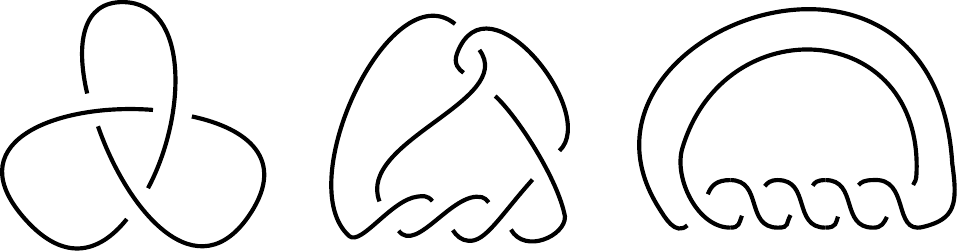}
    \caption{Some examples of knots.}
    \label{exknot}
\end{figure}

\begin{definition}
A \textbf{link} is a collection of one or more knots (that can be, but do not have to be, interlinked). An \textbf{n-component link} is a collection of $n$ knots.
\end{definition}

Note that a knot is a 1-component link, which means the study of links includes the study of knots. For the majority of this paper, we will focus on 2-component links. See Figure~\ref{exlink} for some examples of links with multiple components. To simplify the study of knotted loops in three dimensions, we will carefully project our links to two dimensions. 

\begin{figure}
    \centering
    \includegraphics[width=4.25in]{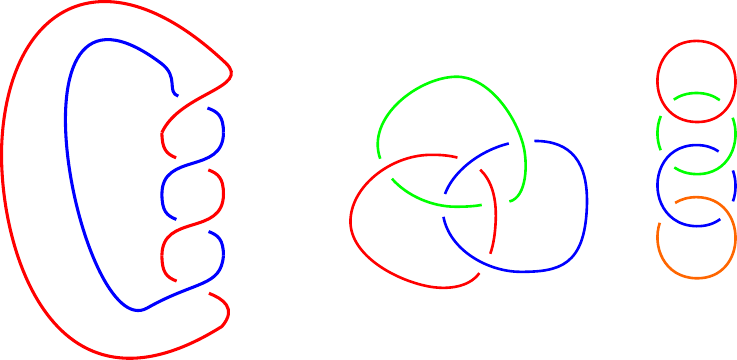}
    \caption{A 2-component link (left), a 3-component link (middle), and a 4-component link (right).}
    \label{exlink}
\end{figure}

\begin{definition}\label{linkshadow}
A \textbf{link shadow} is a projection of a link onto the plane $\mathbb{R}^2$ so that all crossings are transverse double crossings. 
\end{definition}

We can think of a link shadow as being formed by carefully shining a flashlight on a link and viewing the shadow it casts. Figure~\ref{shadow} provides both an example and a non-example of a link shadow.

\begin{figure}
    \centering
    \includegraphics{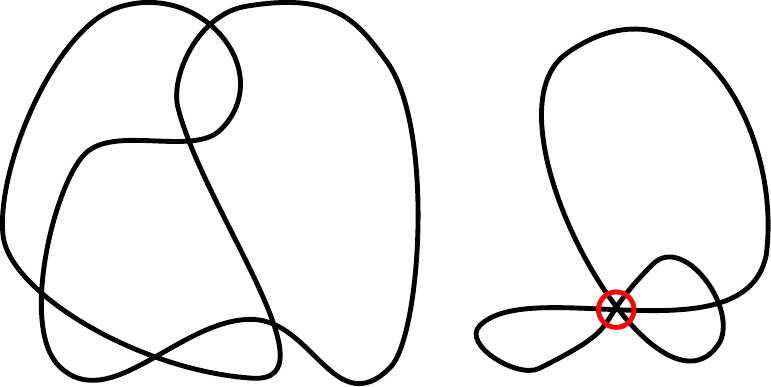}
    \caption{The figure on the left is a valid link shadow. The figure on the right is not, as the crossing circled in red has three strands meeting at the crossing, rather than two.}
    \label{shadow}
\end{figure}

Notice that link shadows only tell us where crossings occur, not which strand is above the other at each crossing. This means that multiple links can have the same link shadow. To indicate a specific link, we need to include more information.

\begin{definition} \label{resolvedef} 
A \textbf{resolved crossing} is a crossing in a link projection that has been \textbf{resolved} so that the \textbf{overstrand} and the \textbf{understrand} are distinguishable. This is usually depicted by adding two gaps to the understrand. A crossing that has not been resolved is called an \textbf{unresolved crossing}. 
\end{definition}

To resolve an unresolved crossing, we choose which strand becomes the overstrand. There are two possible resolutions for each crossing, as shown in Figure~\ref{resolve}. We now define the results of resolving a subset of crossings of a link shadow.  

\begin{figure}
    \centering
    \includegraphics[width=1.5in]{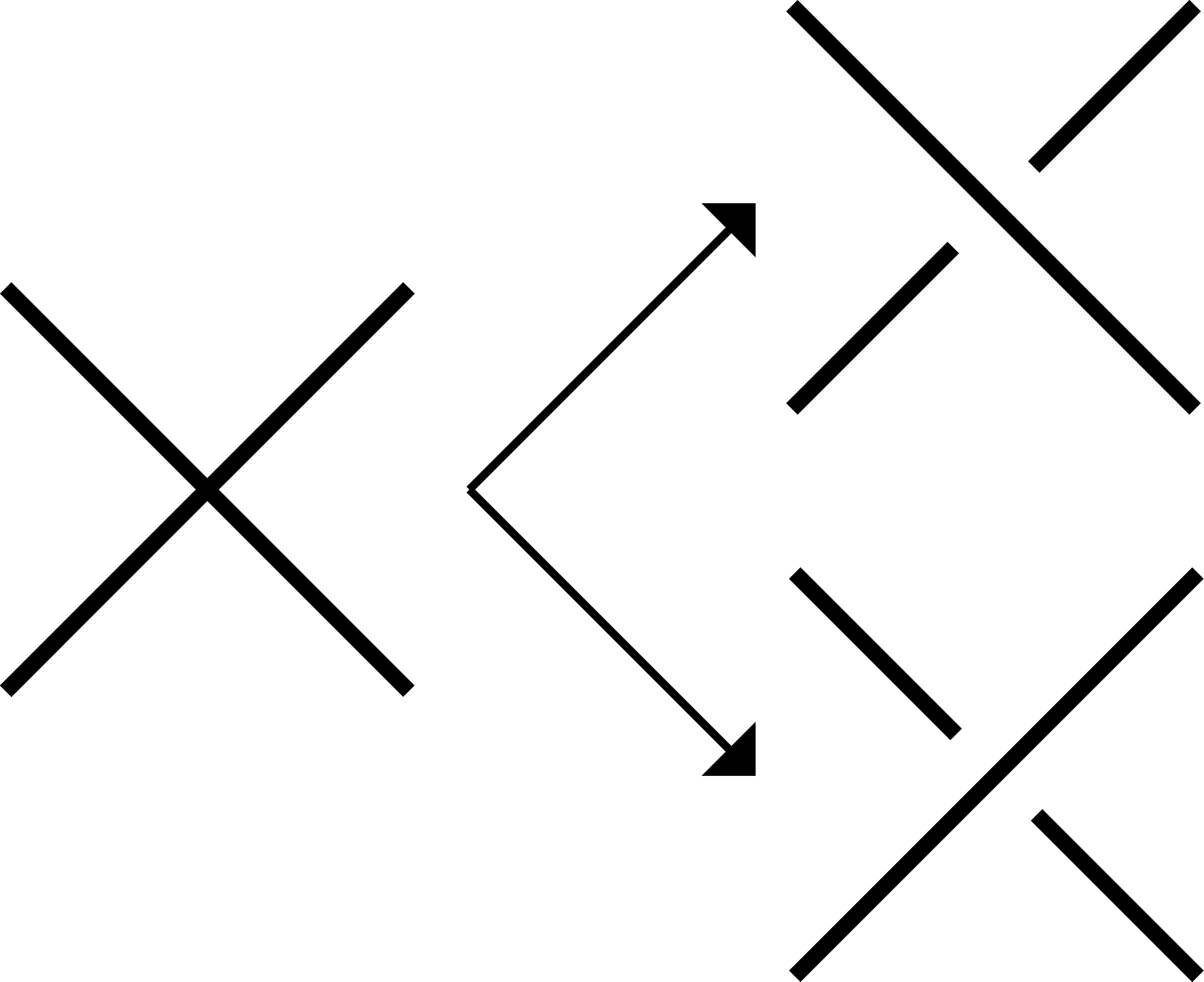}
    \caption{An unresolved crossing of a link projection (left) and the two possible resolutions for this crossing (right).}
    \label{resolve}
\end{figure}

\begin{definition}
A \textbf{link pseudodiagram} is a link projection where an arbitrary number of crossings have been resolved. From this perspective, a link shadow is a link pseudodiagram where no crossings have been resolved and a \textbf{link diagram} is a link pseudodiagram where every crossing has been resolved. 
\end{definition}

For an example of a link pseudodiagram, see Figure~\ref{ResolvedCrossings}. Given the family of all links and the family of all link diagrams, we will now discuss notions of equivalence for each of these families.

\begin{figure}
    \centering
    \includegraphics[width=2.5in]{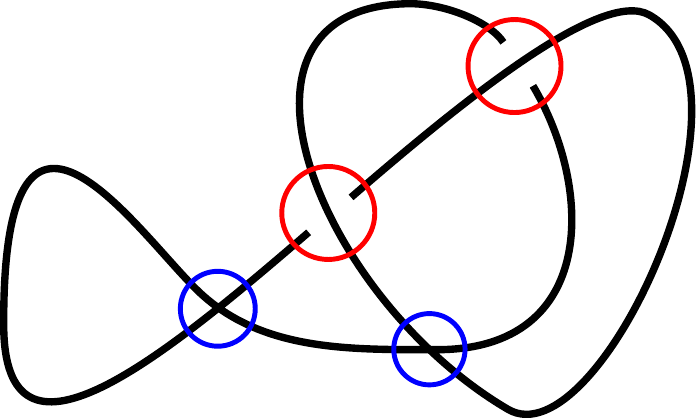}
    \caption{A link pseudodiagram where the crossings circled in red are resolved and the crossings circled in blue are unresolved.}
    \label{ResolvedCrossings}
\end{figure}

\begin{definition} If we can manipulate three-dimensional space $\mathbb{R}^3$ to deform one link $L_1$ into another link $L_2$, then $L_1$ and $L_2$ are called \textbf{equivalent}. 
\end{definition}

Link equivalence allows the collection of all links to be partitioned into equivalence classes. To determine equivalence on a diagrammatic level, we use \textbf{Reidemeister moves}, which are depicted in Figure~\ref{RMoves}, and \textbf{planar isotopies}, which are depicted in Figure~\ref{PlanarIsotopy}. Both Reidemeister moves and planar isotopies are local moves, meaning they occur within a fixed region of the plane (so the diagram outside of this region remains unchanged and is, therefore, ommitted from Figure~\ref{RMoves} and Figure~\ref{PlanarIsotopy}). A planar isotopy can be thought of as stretching and bending a single strand of a link diagram without affecting the crossing structure of the diagram. An example of a planar isotopy is shown in Figure~\ref{PIEx}. 

\begin{figure}
    \centering
    \includegraphics[width=2.5in]{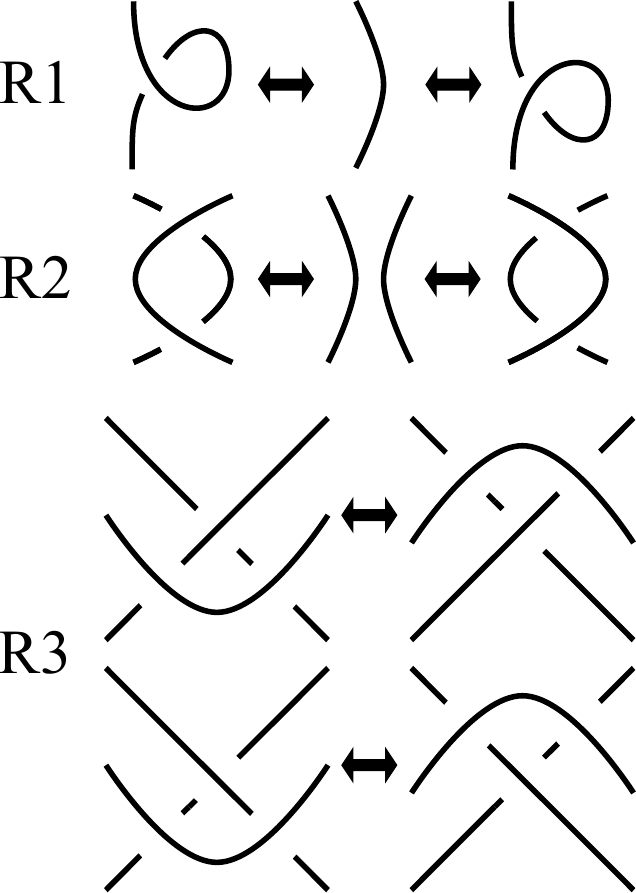}
    \caption{The three Reidemeister moves: R1 moves add or remove a loop, R2 moves overlay one strand on top of a nearby strand (or the reverse process), and R3 moves slide a strand over a crossing.}
    \label{RMoves}
\end{figure}

\begin{figure}
    \centering
    \includegraphics[width=2in]{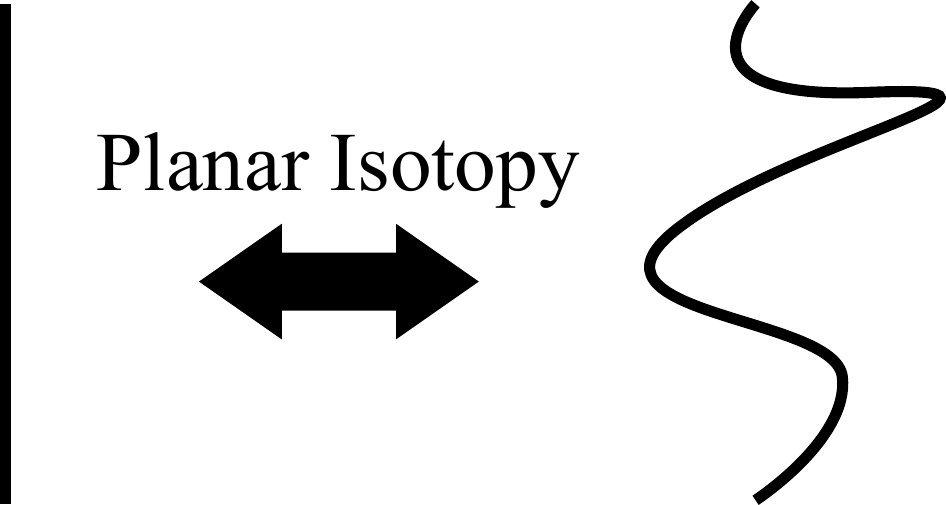}
    \caption{A general planar isotopy.}
    \label{PlanarIsotopy}
\end{figure}

\begin{figure}
    \centering
    \includegraphics{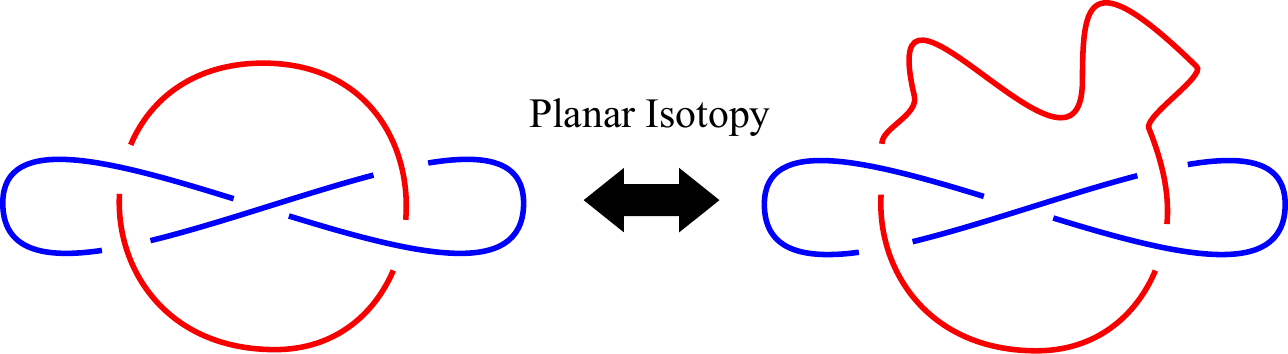}
    \caption{An example of a planar isotopy.}
    \label{PIEx}
\end{figure}

\begin{definition} If there is a finite sequence of Reidemeister moves and planar isotopies that turns a given link diagram $D_1$ into another link diagram $D_2$, then $D_1$ and $D_2$ are called \textbf{equivalent}. 
\end{definition}

As was the case with link equivalence, link diagram equivalence allows the collection of all link diagrams to be partitioned into equivalence classes. In this paper, we will focus almost exclusively on the R2 move.

\subsection{Splittable Link Diagrams, Non-Self-Intersections, and Linking Numbers} \label{NSILN}

The ability to determine whether or not the components of a 2-component link diagram can be separated from each other will be crucial for determining the winner of the Linking-Unlinking Game. As such, we divide the family of link diagrams into splittable link diagrams and unsplittable link diagrams. 

\begin{definition}
A link diagram is called \textbf{splittable} if it is equivalent to a \textbf{split link diagram} where one or more components can be drawn entirely within a circle, while the other components are entirely outside of the circle. If a link diagram is not splittable (resp. not split), then it is called \textbf{unsplittable} (resp. \textbf{non-split}).
\end{definition}

If a link diagram is splittable, we can imagine being able to split one or more components away from the rest of the link diagram. Figure~\ref{split} and Figure~\ref{unsplit} provide examples of splittable and unsplittable 2-component link diagrams, respectively. 

\begin{figure}
    \centering
    \includegraphics[width=2.5in]{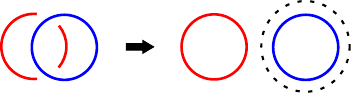}
    \caption{An example of a splittable link diagram (where each link diagram component in this case is an unknotted circle).}
    \label{split}
\end{figure}

\begin{figure}
    \centering
    \includegraphics[width=1.5in]{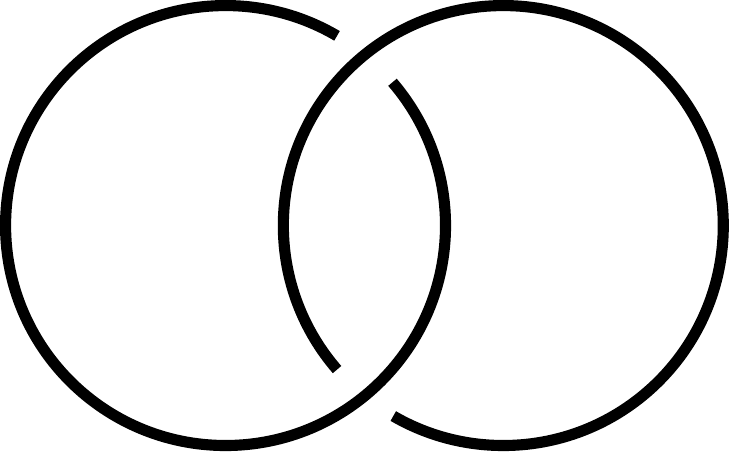}
    \caption{An example of an unsplittable link diagram called the Hopf link diagram.}
    \label{unsplit}
\end{figure}

Now with an understanding of splittable and unsplittable link diagrams, we will introduce an invariant of 2-component links called the linking number which will be used to help us detect when a link diagram is unsplittable. To define this quantity, we first need to discuss orientations of link pseudodiagrams. 
 
\begin{definition}
An \textbf{oriented} link pseudodiagram is a link pseudodiagram where one of two possible directions of travel has been chosen for each component of the link pseudodiagram.
\end{definition}

For an example of an oriented 2-component link diagram, see Figure~\ref{writhe}. Given an oriented link pseudodiagram, we can associate signs to each resolved crossing. 

\begin{figure}
    \centering
    \includegraphics[width=1.75in]{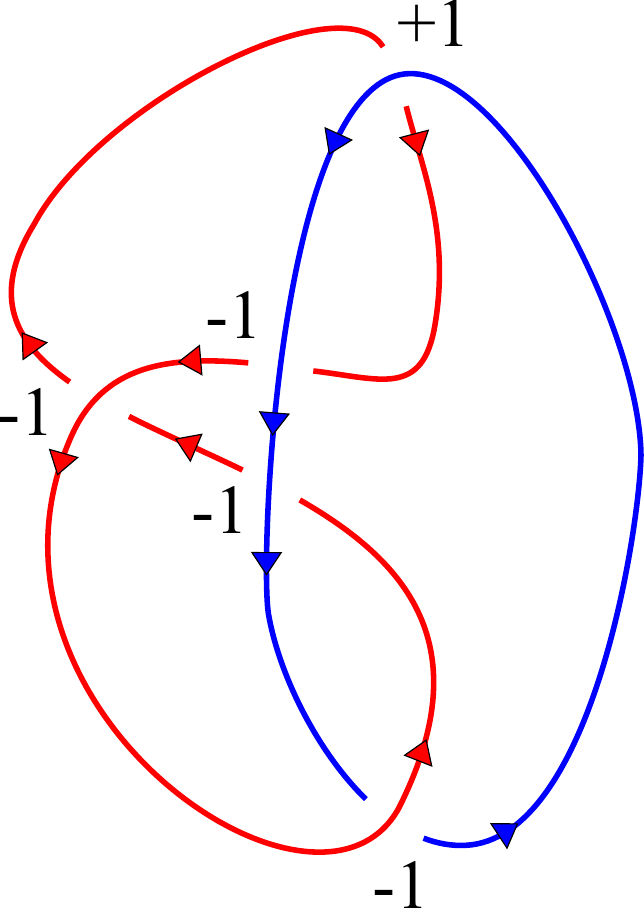}
    \caption{An oriented 2-component link diagram labeled with the signs of each of its crossings. Arrows are used to denote the choice of direction of travel.}
    \label{writhe}
\end{figure}

\begin{definition}
To each resolved crossing of an oriented link pseudodiagram, we can associate a \textbf{crossing sign} as shown in Figure~\ref{sign}. 
\end{definition}

\begin{figure}
    \centering
    \includegraphics[width=2in]{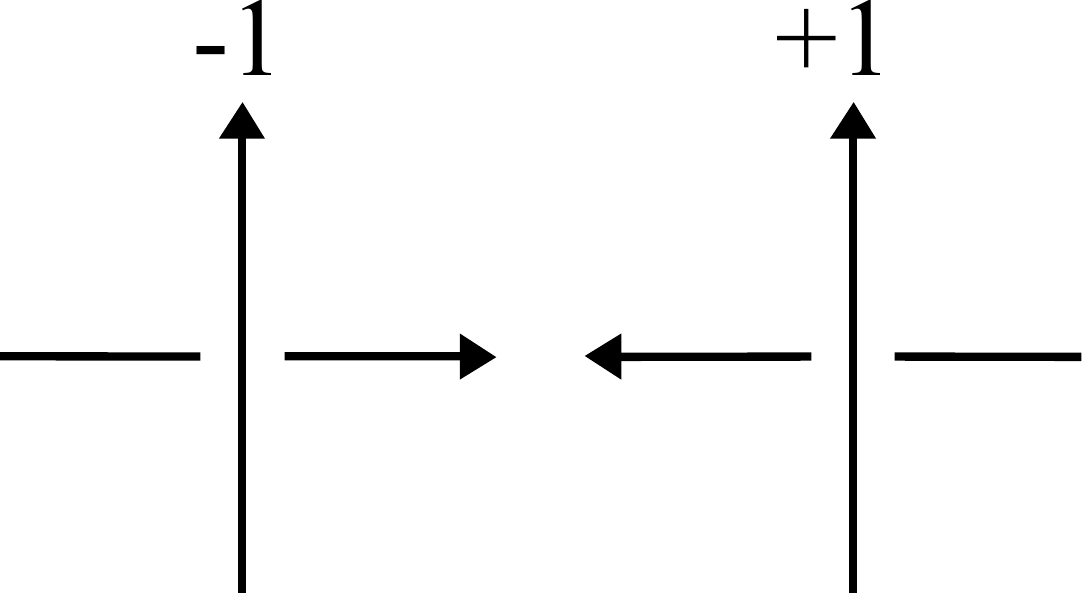}
    \caption{The left figure shows a $-1$ crossing and the right figure shows a $+1$ crossing.}
    \label{sign}
\end{figure}

In Figure~\ref{writhe}, each crossing is labeled with its sign. The final ingredient needed to define the linking number is the classification of the crossings of a link pseudodiagram into self-intersections and non-self-intersections.

\begin{definition} \label{self-intersection}
 If the two strands meeting at a crossing of a link pseudodiagram come from the same component of the link pseudodiagram, then such a crossing is called a \textbf{self-intersection (SI)}. If a crossing is not a self-intersection, then we call it a \textbf{non-self-intersection (NSI)}.
\end{definition}

Note that all crossings of a knot pseudodiagram are necessarily SIs. Consequently, NSIs can only occur in link pseudodiagrams containing at least two components. Figure~\ref{exsi} provides examples of SIs and NSIs in a link diagram. The idea of classifying the crossings of a link pseudodiagram as SIs or NSIs will be used in the definition of the linking number below as well as later on in this paper. 

\begin{figure}
    \centering
    \includegraphics[width=2in]{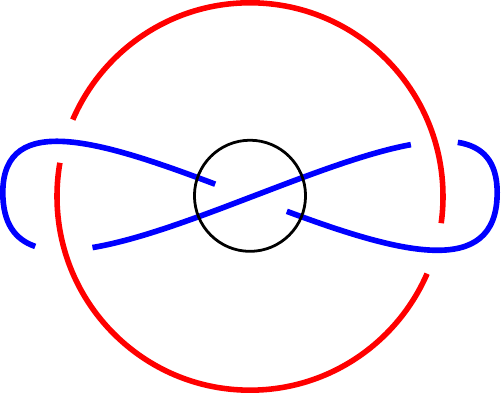}
    \caption{The circled crossing is a self-intersection. All other crossings are non-self-intersections.}
    \label{exsi}
\end{figure}

\begin{definition}
The \textbf{linking number} of an oriented 2-component link diagram is defined to be half of the sum of the crossing signs, where the sum is taken over all of the non-self-intersections (NSIs) of the link diagram.
\end{definition}

Looking back at Figure~\ref{writhe}, we see that the 2-component link diagram has a linking number of $\frac{1}{2}[3(-1)+1(1)]=-1$. Observe that the leftmost crossing, which has crossing sign $-1$, is not included in the linking number computation because this crossing is an SI. The linking number is an invariant of oriented 2-component link diagrams. This means that if two oriented 2-component link diagrams are equivalent, then they share the same linking number. Note that any splittable 2-component link diagram has linking number 0, as such a link diagram is equivalent to a link diagram with no NSIs. This tells us that any oriented 2-component link diagram with a nonzero linking number is unsplittable. However, an oriented 2-component link diagram with linking number 0 is not necessarily splittable. An example of this is shown in Figure~\ref{WH}. To compute the linking number in this example, we ignore the center crossing because it is an SI. The two crossings on the left have a negative sign while the two crossings on the right have a positive sign. Summing these signs gives a linking number of $\frac{1}{2}[2(-1)+2(1)]=0$, even though the link diagram is known to be unsplittable through other methods.

\begin{figure}
    \centering
    \includegraphics[width=2.5in]{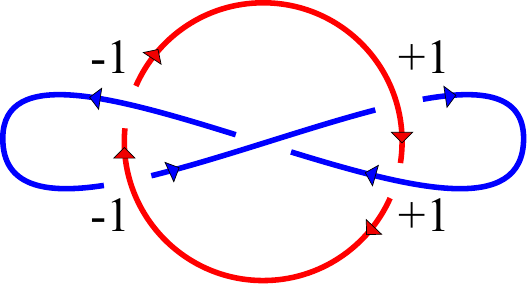}
    \caption{The Whitehead link is unsplittable with a linking number of $0$.}
    \label{WH}
\end{figure}

The following proposition, which will be useful later in this paper, determines the parity of the number of NSIs in a 2-component link pseudodiagram.

\begin{proposition} \label{evenNSI} 
Every 2-component link pseudodiagram contains an even number of non-self-intersections (NSIs).
\end{proposition}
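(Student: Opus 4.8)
The plan is to translate the combinatorial count of NSIs into a statement about plane curves and then reduce it to the Jordan Curve Theorem. Label the two components of the link pseudodiagram $A$ and $B$, and regard each as a closed curve in $\mathbb{R}^2$, forgetting all over/under information (a pseudodiagram and its underlying shadow have the same underlying plane curves, so this loses nothing). As in any link shadow, every crossing is a transverse double point, so a crossing is an NSI precisely when one of its two strands lies on $A$ and the other on $B$; equivalently, the NSIs are exactly the (transverse, double) intersection points of the plane curve $A$ with the plane curve $B$. Hence it suffices to prove that $\#(A\cap B)$ is even.

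Next I would reduce $B$ to a disjoint union of simple closed curves. Each self-intersection of $B$ is a double point, hence disjoint from $A$. Smoothing every self-crossing of $B$ inside a small neighborhood chosen to miss $A$ converts $B$ into an embedded compact $1$-manifold $B_1 \sqcup \cdots \sqcup B_m$, that is, a disjoint union of simple closed curves, with $A \cap (B_1 \cup \cdots \cup B_m) = A \cap B$ as subsets of the plane and with $A$ still transverse to each $B_i$. Therefore $\#(A\cap B) = \sum_{i=1}^{m} \#(A\cap B_i)$, and it is enough to show each summand is even.

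Finally I would apply the Jordan Curve Theorem to each simple closed curve $B_i$, which separates $\mathbb{R}^2$ into an "inside" and an "outside." Pick a basepoint $p_0$ on $A$ that is not one of the finitely many points of $A\cap B$, so $p_0$ lies off every $B_i$. Traverse the closed curve $A$ once, starting and ending at $p_0$: each transverse crossing of $B_i$ moves the traveller between the inside and the outside of $B_i$, and since the traversal is a loop it must make an even number of such moves, so $\#(A\cap B_i)$ is even. Summing over $i$ shows $\#(A\cap B)$, and hence the number of NSIs, is even.

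I expect the only real subtlety to be a bookkeeping point rather than a genuine obstacle: justifying that the self-crossing smoothings on $B$ can be carried out simultaneously and locally so as to neither create nor destroy any intersection with $A$, and that the outcome is genuinely a disjoint union of simple closed curves. If one prefers to avoid smoothing, the same conclusion follows by defining, for $q\notin B$, the mod-$2$ count $w_B(q)$ of transverse intersections of a generic ray from $q$ with $B$, checking it is well defined and locally constant on $\mathbb{R}^2\setminus B$, and observing that $w_B$ flips parity exactly at each point of $A\cap B$ as one traverses $A$; since $A$ is a loop, the number of such flips is even.
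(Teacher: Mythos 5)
Your proof is correct and rests on the same parity idea as the paper's: traverse one component as a closed loop and observe that each NSI toggles which ``side'' of the other component you are on, so the number of toggles, hence of NSIs, must be even. Where you genuinely differ is in how the inside/outside of the other (generally self-intersecting) component is made precise. The paper keeps Component~1 intact and uses a canonical checkerboard coloring of its shadow, declaring the black regions the inside and the white regions the outside, so that each NSI flips the color because adjacent regions are oppositely colored. You instead smooth the self-crossings of that component---legitimately, since every crossing is a transverse double point, so its self-crossings lie off the other component and the smoothings can be performed in small neighborhoods missing it---obtaining a disjoint union of simple closed curves to which the Jordan Curve Theorem applies literally, and you obtain the slightly finer conclusion that the intersection count with \emph{each} resulting simple closed curve is even before summing. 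Your route trades the citation of checkerboard colorability for the smoothing bookkeeping you flag (or, in your alternative, for a mod-2 winding-number argument); it is arguably tighter, since the paper's direct appeal to the Jordan Curve Theorem is only heuristic when Component~1 is not simple and is really carried by the coloring, whereas your reduction invokes the theorem exactly where its hypotheses hold.
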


\begin{proof}
The result is clearly true for a split 2-component link pseudodiagram, which contains no NSIs. Now consider a non-split 2-component link pseudodiagram, which necessarily contains NSIs. Choose a component of the link pseudodiagram and call it Component~1. We can distinguish between the ``inside'' and ``outside'' of Component~1 by giving it a canonical checkerboard coloring, that is, by coloring the regions of the shadow of Component~1 either black or white so that regions sharing an edge have opposite colors and so that the unbounded region is colored white. We then view the black (resp. white) regions as the ``inside'' (resp. ``outside'') of Component~1. An example of a canonical checkerboard coloring is shown in Figure~\ref{checker}. It is a classical result that every link pseudodiagram has a checkerboard coloring. (For more details, see Section~X.6 of \cite{Bollobas}.) 

\begin{figure}
    \centering
    \includegraphics[width=2in]{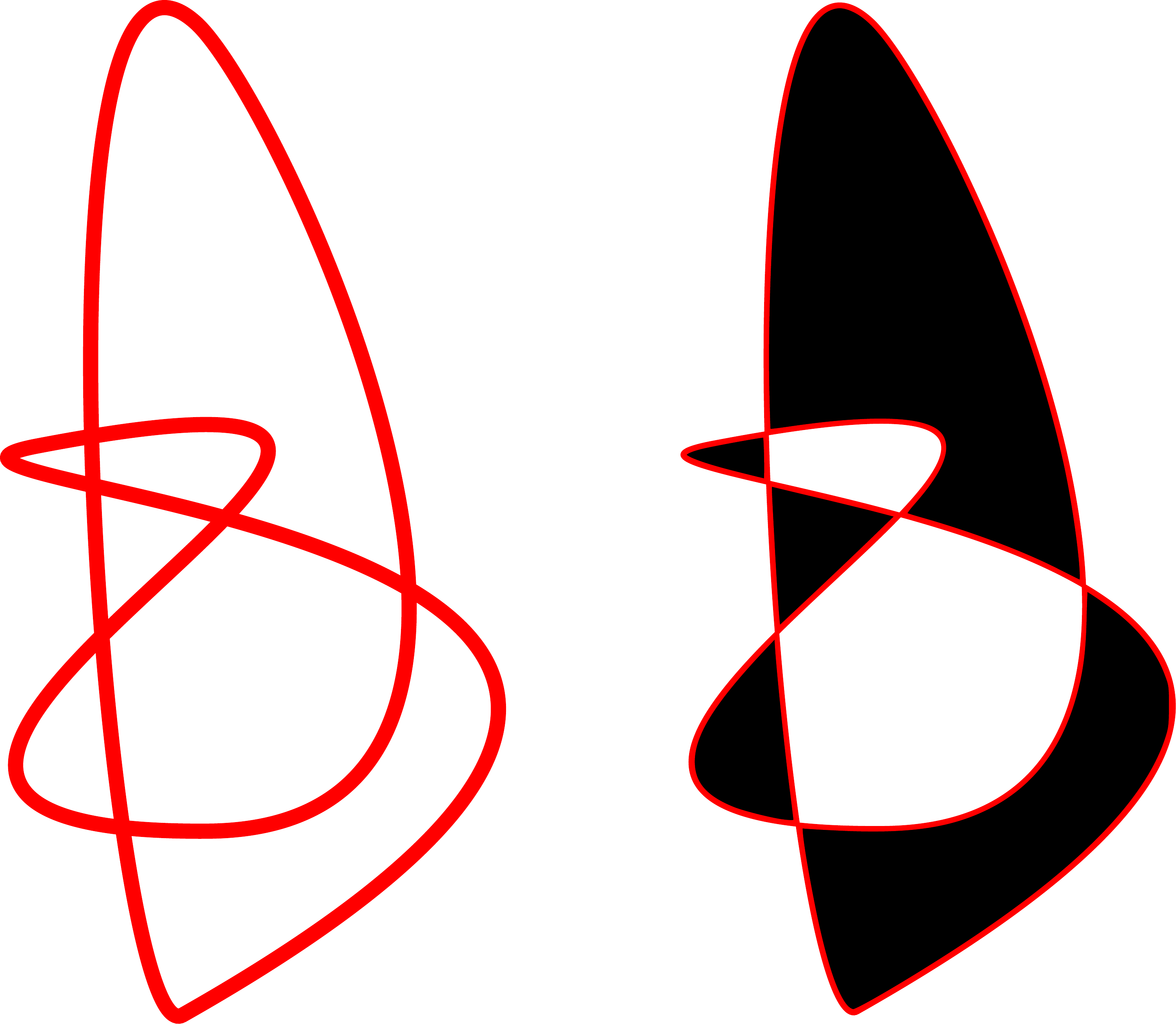}
    \caption{A knot shadow on the left with its canonical checkerboard coloring on the right. The black (resp. white) regions indicate the ``inside'' (resp. ``outside'') of the knot shadow.}
    \label{checker}
\end{figure}

Call the second component of the link pseudodiagram Component~2. Assign Component~2 an orientation and choose a starting point on Component~2 that is outside of Component~1. Since there are NSIs, at some point Component~2 will cross from the outside of Component~1 to the inside of Component~1. Since Component~2 enters the inside of Component~1, then it must also exit the inside of Component~1 because Component~2 is a closed curve that starts outside of Component~1. This follows from the Jordan Curve Theorem, which states that every simple closed curve in the plane has an interior and an exterior. (For more details, see Chapter~3 of \cite{Henle}.) Thus, the link pseudodiagram must contain at least two NSIs. 

Following the orientation of Component~2 from its starting point, the first NSI will bring us from the outside of Component~1 to the inside of Component~1. The second NSI will bring us back to the outside of Component~1. By iterating this argument, we can see that we are outside of Component~1 after passing through an even number of NSIs and we are inside of Component~1 after passing through an odd number of NSIs. 

If the link pseudodiagram contained an odd number of NSIs, then we would end up inside of Component~1 after beginning at the starting point, following the orientation of Component~2, and returning to the starting point. This means that the endpoint of Component~2 is inside of Component~1 and the starting point of Component~2 is outside of Component~1. This is a contradiction since the starting point and the endpoint of Component~2 are the same point. Therefore, there cannot be an odd number of NSIs in a 2-component link pseudodiagram.
\end{proof}

\subsection{Rational Tangles and Rational Link Diagrams} \label{ratlink}

Rational links come from rational tangles that are formed by an iterative process of twisting two strands. 
 
\begin{definition}
We define a \textbf{rational tangle}, denoted $(a_1,a_2,\ldots,a_n)$, where all of the $a_i$ are integers, through the following construction. We begin with two parallel vertical strands, as shown in Figure~\ref{RT0}. We then read $(a_1,a_2,\ldots,a_n)$ from left to right, twisting strands as we go. In particular, we begin by twisting the bottom two endpoints $|a_1|$ times and then proceed to alternate between twisting the right two endpoints $|a_{2k}|$ times and twisting the bottom two endpoints $|a_{2k+1}|$ times. If $a_i>0$ (resp. $a_{i}<0$), then we apply $|a_i|$ twists in such a way that the overstrand has a positive (resp. negative) slope. We call the $a_i$ the \textbf{syllables} of the \textbf{rational tangle word} $(a_1,a_2,\ldots,a_n)$. 
\end{definition}

\begin{figure}
\centering
\includegraphics[width=.5in]{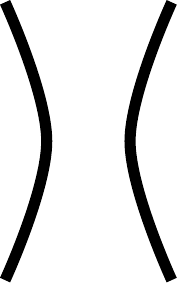}
\caption{The rational tangle (0), which consists of two vertical strands.}
\label{RT0}
\end{figure}

See Figure \ref{RT2321} for an example of the construction of a rational tangle. Since the Linking-Unlinking Game will be played on 2-component link pseudodiagrams, we need to generalize our notation for rational tangle words to allow for unresolved crossings.

\begin{definition}{(\cite{Sums})} By making a subset of the crossings of a rational tangle unresolved, we create a \textbf{rational pseudotangle}. We denote a rational pseudotangle by a \textbf{rational pseudotangle word} $(a_1(b_1),\ldots,a_n(b_n))$, where the $a_i(b_i)$ are called the \textbf{syllables} of the word and where $|a_i|$ denotes the number of resolved crossings in the $i^{\text{th}}$ syllable and $|b_i|$ denotes the number of unresolved crossings in the $i^{\text{th}}$ syllable. If either $a_i=0$ or $b_i=0$ (but not both) for some $i$, then we omit the single occurrence of $0$ or $(0)$ in the rational pseudotangle word. If both $a_i=0$ and $b_i=0$ for some $i$, then we replace $a_i(b_i)$ by $0$ in the rational pseudotangle word. 
\end{definition}

\begin{figure}
\centering
\includegraphics[width=6in]{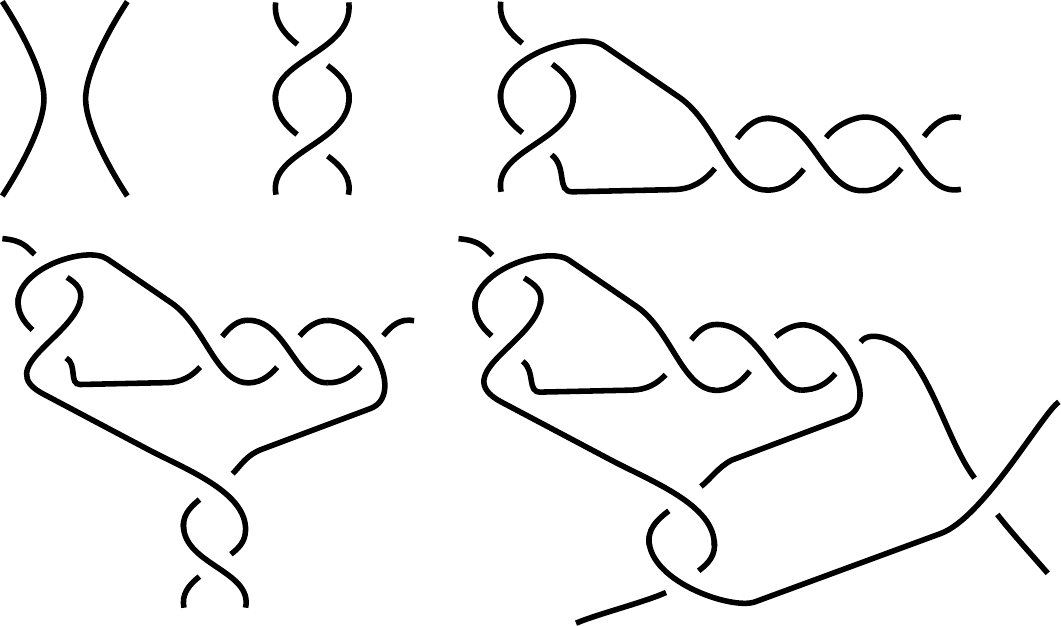}
\caption{The rational tangles $(0)$ (top left), $(2)$ (top center), $(2,-3)$ (top right), $(2,-3,-2)$ (bottom left), and $(2,-3,-2,1)$ (bottom right).}
\label{RT2321}
\end{figure}

We now present a number of tangle equivalences that will be useful later in this paper. 

\begin{proposition}[Lemma 4.2 from \cite{Sums}] \label{tangle eq}
The following statements provide a set of tangle equivalences for rational tangles. Similar statements can also be made for rational pseudotangles.
\begin{enumerate}
\item[0.] $\displaystyle (a_1,\ldots,a_i,0)=(a_1,\ldots,a_i)$
\item[1.] $\displaystyle (a_1,\ldots,a_i,0,a_{i+1},\ldots,a_n)=(a_1,\ldots,a_i+a_{i+1},\ldots,a_n)$  
\item[2.] $\displaystyle(a_1,\ldots,a_i,0,0,a_{i+1},\ldots,a_n)=(a_1,\ldots,a_i,a_{i+1},\ldots,a_n)$
\item[3.] $\displaystyle(0,a_1+1,a_2,\ldots,a_n)=(0,a_1,a_2,\ldots,a_n)$
\item[4.] $\displaystyle(1,a_1,a_2,\ldots,a_n)=(a_1+1,a_2,\ldots,a_n)$
\item[5.] $\displaystyle(-1,a_1,a_2,\ldots,a_n)=(a_1-1,a_2,\ldots,a_n)$
\end{enumerate}
\end{proposition}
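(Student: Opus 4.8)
The plan is to verify each of the six identities by an explicit local manipulation of the rational tangle diagram, using only planar isotopies together with Reidemeister~I and~II moves, and then to observe that each such manipulation is insensitive to whether the crossings involved are resolved, which yields the rational pseudotangle versions for free. It is convenient to split the list into two groups: identities~0--2, which merely rearrange syllables equal to $0$, and identities~3--5, which genuinely add or remove a crossing.

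Identities~0,~1, and~2 are in essence bookkeeping. By construction, a syllable equal to $0$ is the instruction ``twist zero times,'' i.e.\ the identity operation on the tangle, and the construction alternates between twisting the bottom pair of endpoints and twisting the right pair of endpoints. For identity~1 I would track the twisting type (bottom versus right) of each syllable and observe that inserting a single $0$-syllable at position $i+1$ shifts the parity of every later syllable by one, so that $a_i$ and the original $a_{i+1}$ now twist the \emph{same} pair of endpoints; their twists therefore stack into a single block of $a_i+a_{i+1}$ twists, where oppositely-signed twists cancel in pairs by R2 moves, and one checks that every subsequent syllable twists the same pair of endpoints in both words. Identity~2 is handled the same way, since two consecutive $0$-syllables restore the original parity, and identity~0 is immediate because a trailing $0$-syllable is the identity operation. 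In each case the conclusion is in fact that the two diagrams coincide after the evident R2 cancellations, so no deeper argument is needed.

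Identities~3,~4, and~5 are genuine Reidemeister moves concentrated near the beginning of the word. For identities~4 and~5, the leading $\pm 1$ is a single bottom twist adjacent to the block of $|a_1|$ right twists; I would draw this portion of the diagram and exhibit a planar isotopy (together with an R1 move on the leading twist) that slides this single twist into the right-twist block, where it merges to give $a_1 \pm 1$ and simultaneously converts the alternating twisting pattern of all later syllables into the pattern of the shorter word $(a_1 \pm 1, a_2, \ldots, a_n)$. For identity~3, the leading $0$ leaves the two bottom endpoints joined to the rest of the tangle by arcs carrying no crossings, so the discrepancy between $a_1+1$ and $a_1$ lives entirely in a kink that an R1 move removes without affecting anything else.

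The main obstacle I anticipate is not a single hard idea but the need to keep the alternating bottom/right twisting pattern straight through every insertion, deletion, and slide, and to turn the arguments for identities~4 and~5 into figures that verify not only that the leading twist is absorbed but also that the twisting type of every later syllable is correctly reindexed. An alternative, and possibly cleaner, route would be to compute the fraction invariant of each side and appeal to Conway's classification of rational tangles, which I would use as a fallback if the direct pictures became unwieldy. In any case, these identities are Lemma~4.2 of \cite{Sums}, so the fully detailed verification and figures can also simply be cited from there.
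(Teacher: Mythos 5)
The paper itself gives no proof of this proposition: it is imported wholesale as Lemma~4.2 of \cite{Sums}, so your closing remark that the identities can simply be cited is literally what the paper does, and there is no in-paper argument to compare against. Your sketches for identities~0--3 are also essentially correct: a $0$-syllable is the identity operation; a single inserted $0$ flips the bottom/right parity of every later position, so $a_i$ and $a_{i+1}$ twist the same endpoint pair and stack (with R2 cancellations when their signs differ) while all subsequent syllables keep their twist type; two inserted $0$'s restore the parity outright; and after a leading $0$ the second syllable twists the two endpoints of a single strand, so its crossings are kinks removable by R1.

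The gap is in identities~4 and~5. First, the leading $\pm 1$ is a crossing \emph{between} the two strands, not a kink on one strand, so ``an R1 move on the leading twist'' is the wrong tool: no crossing is removed on either side of the identity, only relocated. Second, and more seriously, the required isotopy is not local. In $(1,a_1,a_2,\ldots,a_n)$ the syllable $a_k$ sits in position $k+1$, so for every $k\geq 2$ it twists the \emph{opposite} endpoint pair from the one it twists in $(a_1+1,a_2,\ldots,a_n)$; already for $(1,a_1,a_2)$ versus $(a_1+1,a_2)$ the block $a_2$ must migrate from the bottom of the tangle to its right side. A planar isotopy supported near the first two syllables cannot re-route all of the later twist blocks. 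The standard diagrammatic proofs (e.g.\ Kauffman--Lambropoulou) first establish the horizontal and vertical flip invariance of rational tangles by induction on the crossing number and only then deduce these twist-transfer identities; alternatively one computes the continued fraction, $a_n+1/(\cdots+1/(a_1+1/(\pm 1)))=a_n+1/(\cdots+1/(a_1\pm 1))$, and invokes Conway's classification, which is the fallback you mention. Either of those routes (or the citation) closes the gap; the local-slide picture as described does not.
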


Given that a rational pseudotangle has four endpoints, there are two ways to close a rational pseudotangle to form a rational link pseudodiagram.  

\begin{definition} \label{tangleclosure}
A rational pseudotangle can be closed to form a \textbf{rational link pseudodiagram} either by connecting the top endpoints together and the bottom endpoints together, forming the \textbf{numerator closure}, or by connecting the left endpoints together and the right endpoints together, forming the \textbf{denominator closure}. 
\end{definition}

The left side of Figure~\ref{TBLRC} shows the numerator closure of the rational tangle $(2,-3,-2,1)$ from Figure~\ref{RT2321}, which creates a knot diagram, and the right side of Figure~\ref{TBLRC} shows the denominator closure  of this tangle, which creates a 2-component link diagram. In general, either both closures of a rational pseudotangle will be knot pseudodiagrams or one closure will be a knot pseudodiagram and the other will be a 2-component link pseudodiagram. 

\begin{figure}
\centering
\includegraphics[width=5.5in]{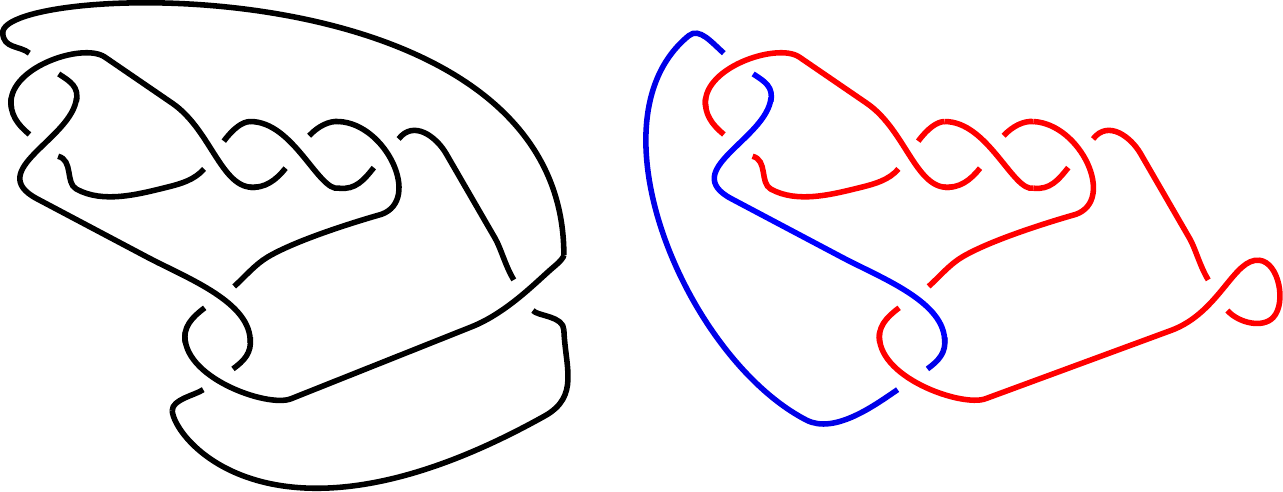}
\caption{The numerator closure of the tangle $(2,-3,-2,1)$ (above on the left) produces a knot diagram and the denominator closure of the tangle $(2,-3,-2,1)$ (above on the right) produces a 2-component link diagram.}
\label{TBLRC}
\end{figure}



\subsection{Non-Self-Intersections for Rational Tangles} \label{ratlinkprops} 

In Definition~\ref{self-intersection}, we defined self-intersections (SIs) and non-self-intersections (NSIs) for link pseudodiagrams. We will now define SIs and NSIs for rational pseudotangles.  

\begin{definition} \label{tangleNSI}
A crossing that occurs between the two strands of a rational pseudotangle is called a \textbf{self-intersection (SI)}. Otherwise, the crossing is called a \textbf{non-self-intersection (NSI)}. Furthermore, a syllable of a rational pseudotangle word is called an \textbf{SI syllable} (resp. \textbf{NSI syllable}) if all of the crossings in the syllable are SIs (resp. NSIs).  
\end{definition}

If a rational pseudotangle has a closure that is a 2-component link pseudodiagram, then the two strands of the pseudotangle become the two separate components of the link pseudodiagram and the SIs and NSIs of the pseudotangle become the SIs and NSIs of the link pseudodiagram, respectively. If a rational pseudotangle does not have a closure that is a 2-component link pseudodiagram (if both closures result in a knot pseudodiagram), then both the SIs and the NSIs of the pseudotangle become SIs of the knot pseudodiagram since knot pseudodiagrams cannot contain NSIs. 

When looking for winning strategies for the Linking-Unlinking Game played on rational 2-component link shadows, we need to make sure that there actually exists a closure of the rational pseudotangle that is a 2-component link pseudodiagram. The following result addresses this issue.

\begin{proposition}\label{ratevenNSI}
A rational pseudotangle has a closure that is a 2-component link pseudodiagram if and only if the pseudotangle contains an even number of NSIs.
\end{proposition}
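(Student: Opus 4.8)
The plan is to track, as we traverse one of the two tangle strands, which strand we are "on," and to show that the closure type (knot vs. 2-component link) is governed entirely by the parity of the number of NSIs. First I would set up coordinates: label the four endpoints of the rational pseudotangle as NW, NE, SW, SE. A rational pseudotangle is built from the trivial tangle $(0)$ by twisting, and each twist is a local swap of two adjacent endpoints; more importantly, I can think of the two strands of the pseudotangle as a "wiring" that connects the four endpoints in pairs. There are exactly three planar pairings of four boundary points, but for a rational pseudotangle only two occur: either the two strands connect $\{$NW, NE$\}$ and $\{$SW, SE$\}$ (the "$0$-type" pairing) or they connect $\{$NW, SW$\}$ and $\{$NE, SE$\}$ (the "$\infty$-type" pairing); the pairing $\{$NW, SE$\}$, $\{$NE, SW$\}$ is not realizable by a rational tangle. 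The key point is that the numerator closure joins NW to NE and SW to SE, while the denominator closure joins NW to SW and NE to SE, so: the $0$-type pairing gives a 2-component link under numerator closure and a knot under denominator closure, and the $\infty$-type pairing does the reverse. In either case, exactly one closure is a 2-component link pseudodiagram if and only if the pairing is "$0$-type" relative to one closure and not the other — equivalently, the two strands of the pseudotangle end up at boundary-point pairs that one of the two closures separates.

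Next I would connect the pairing type to the NSI count. Start at endpoint NW and traverse the strand containing it; it ends at one of NE, SW, SE (not NW). I claim that each NSI crossing corresponds to a transition in the "which of the two strands am I currently tracing" bookkeeping only in a bounded way — better: I would instead argue via the induction already implicit in the paper's construction. Build the pseudotangle syllable by syllable from $(0)$. The tangle $(0)$ has two vertical strands, pairing NW--SW and NE--SE ($\infty$-type), with zero crossings, hence zero NSIs; parity matches. Now I add one crossing at a time (either at the bottom or at the right, depending on the syllable index). Adding a single crossing at, say, the bottom endpoints either (i) joins two points that were on the same strand — an SI — which does not change the boundary pairing and does not change the parity of the NSI count's contribution to it; or (ii) joins two points on different strands — an NSI — which merges the endpoint-pairing and flips which closure separates the strands. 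So I would prove by induction on the number of crossings: after building the pseudotangle, the boundary pairing is "$0$-type" (numerator closure yields a 2-component link) if and only if the number of NSIs so far is even, and is "$\infty$-type" (denominator closure yields a 2-component link) if and only if the number of NSIs is odd. Since a crossing is an SI exactly when it joins two arcs already on the same strand and an NSI exactly when it joins arcs on different strands, each new crossing either leaves the pairing alone (SI) or toggles it (NSI), giving the parity statement immediately.

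From that invariant the proposition follows: the rational pseudotangle has \emph{some} closure that is a 2-component link pseudodiagram precisely when its boundary pairing is either "$0$-type" or "$\infty$-type" in the sense that one of the two closures separates the two strands — but by the induction every rational pseudotangle has one of these two pairings (never the forbidden third), so there is \emph{always} a closure that is a 2-component link pseudodiagram\ldots which would be too strong. So I must be more careful: the correct dichotomy is that a rational pseudotangle's two strands either (a) can be separated by exactly one of the two closures, yielding a 2-component link on that side and a knot on the other, or (b) are arranged so that \emph{both} closures connect the two strands into a single knot. Case (b) is exactly the third, "crossed," pairing $\{$NW, SE$\}$, $\{$NE, SW$\}$, and I would show this pairing does arise (e.g.\ $(1)$ has one crossing joining a bottom point to\ldots). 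Re-examining: the single-crossing tangle $(1)$ connects NW--SE and NE--SW; that one crossing is an NSI-or-SI? Its two strands in $(1)$ are the two arcs NW--SE and NE--SW, which meet at the crossing — so it is an SI, and the count of NSIs is $0$, even. The clean statement to prove, then, is: the number of NSIs is even iff the pseudotangle's two strands occupy a pair of boundary points that is separated by one closure (namely: the strand endpoints are "numerator-adjacent" or "denominator-adjacent"), i.e.\ iff some closure is a 2-component link pseudodiagram; and the number of NSIs is odd iff the strands are "diagonally" placed so both closures merge them into a knot. I would verify the base case $(0)$ (0 NSIs, 2-component link closure exists) and the inductive step as above (SI preserves, NSI toggles between the "separable" and "diagonal" configurations), and I would double-check the single-crossing and two-crossing cases by hand against Figure~\ref{TBLRC}-style pictures.

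The main obstacle will be pinning down the case analysis for how adding one twist changes the boundary pairing and the SI/NSI classification simultaneously — in particular confirming that an SI always preserves the pairing configuration while an NSI always toggles it, and handling twists added at the right endpoints versus the bottom endpoints uniformly. A clean way around this is to reformulate everything in terms of a single combinatorial invariant: color the four boundary points with the two strand labels, and observe that adding a crossing on two adjacent boundary arcs is a transposition of labels that is trivial (both arcs already same label: SI) or nontrivial (NSI), and that the "is some closure a 2-component link" predicate is exactly a parity function of the induced permutation. Once that reformulation is in place, the proof is a one-line parity argument plus the base case.
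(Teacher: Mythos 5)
Your underlying strategy --- induct on the twist-by-twist construction, track which of the three boundary pairings the two strands realize, and show that an SI twist preserves the pairing while an NSI twist toggles between the diagonal pairing and the two non-diagonal ones --- is sound and genuinely different from the paper's argument, but as written your proposal contradicts itself at exactly the point you flagged as delicate, and the contradiction comes from wavering on the SI/NSI convention. Your inductive step uses ``two points on the same strand gives an SI, on different strands gives an NSI,'' which is the convention in force throughout the paper (see the proof of Condition~(1) of Proposition~\ref{SI}, where the crossings of $a_1$ are NSIs precisely because they involve both strands, and the remark following Definition~\ref{tangleNSI} that under a 2-component closure the pseudotangle's NSIs become the link pseudodiagram's NSIs); the literal phrase ``between the two strands'' in Definition~\ref{tangleNSI} should be read against that usage. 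In your hand-check of the tangle $(1)$ you adopt the opposite reading and declare its unique crossing an SI, so that $(1)$ has $0$ NSIs; since both closures of $(1)$ are knot pseudodiagrams, this is a counterexample to the very invariant you then announce (NSI count even iff some closure is a 2-component link pseudodiagram), and the proposal never resolves the clash. Under the correct convention the crossing of $(1)$ is an NSI, the count is odd, and the example supports the invariant. Likewise your opening claim that the diagonal pairing (NW with SE, NE with SW) is not realizable by a rational tangle is false --- $(1)$ realizes it --- and although you retract this, the final write-up must excise that first-paragraph argument and commit cleanly to: base case $(0)$ is non-diagonal with $0$ NSIs; a twist on two endpoints of the same strand is an SI and leaves the pairing unchanged; a twist on endpoints of different strands is an NSI and toggles diagonal versus non-diagonal; and a closure is a 2-component link pseudodiagram iff the pairing is non-diagonal. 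You should also add the one-sentence observation that a crossing's SI/NSI type at the moment of creation agrees with its type in the finished pseudotangle, because the two global strands persist through the construction (twisting only relocates their endpoints), so your induction really is counting the NSIs of the final object.

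With those repairs your argument is correct and takes a different route from the paper's. The paper obtains the forward direction immediately from Proposition~\ref{evenNSI} and the converse by contradiction: if both closures were knots, the strands would be paired diagonally, and appending a single half-twist would yield a pseudotangle with an odd number of NSIs whose numerator closure is a 2-component link pseudodiagram, contradicting Proposition~\ref{evenNSI} again. Your induction bypasses Proposition~\ref{evenNSI} entirely, is purely combinatorial in the tangle construction, and gives a little more: it identifies which closure is the 2-component one and shows the diagonal pairing occurs exactly when the NSI count is odd. The price is the case bookkeeping (bottom versus right twists, and the convention issue above); the paper's proof is shorter because Proposition~\ref{evenNSI} has already done the parity work.
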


\begin{proof}
($\Rightarrow$) Assume we have a 2-component link pseudodiagram that is a closure of a rational pseudotangle. By Proposition~\ref{evenNSI}, the link pseudodiagram must contain an even number of NSIs. This implies that the rational pseudotangle must also contain an even number of NSIs because, otherwise, the rational pseudotangle would contain an odd number of NSIs and closure would create a 2-component link pseudodiagram with an odd number of NSIs, contradicting Proposition~\ref{evenNSI}. 

\bigskip

\noindent ($\Leftarrow$) Assume we have a rational pseudotangle that contains an even number of NSIs and suppose, for a contradiction, that both the numerator and denominator closures result in a knot pseudodiagram. The only way this can happen is for one strand of the pseudotangle to have endpoints in the northeast and southwest corners and the other strand to have endpoints in the northwest and southeast corners, as shown on the left side of Figure~\ref{ratevencombined}.

\begin{figure}
\centering
\includegraphics[width=5in]{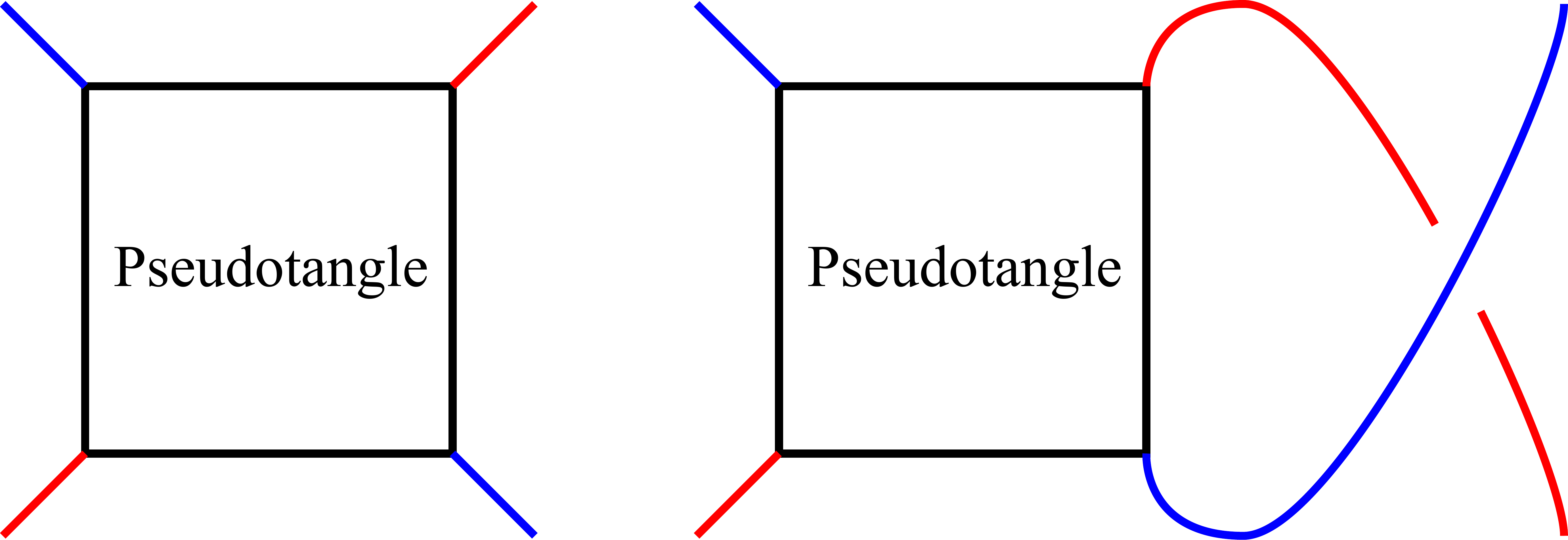}
\caption{A rational pseudotangle where the red strand has endpoints in the northeast and southwest corners and the blue strand has endpoints in the northwest and southeast corners (left) and the result of adding a half-twist to this pseudotangle (right).}
\label{ratevencombined}
\end{figure}


Add a half-twist to the end of the pseudotangle (which corresponds to the last syllable of the pseudotangle word), as shown on the right side of Figure~\ref{ratevencombined}. This produces a rational pseudotangle with an odd total number of NSIs. We can see that the numerator closure of this new rational pseudotangle will result in a 2-component link pseudodiagram. But this means we have a 2-component link pseudodiagram with an odd total number of NSIs, which contradicts Proposition~\ref{evenNSI}. 
\end{proof}



Proposition~\ref{SI} below provides information about SIs and NSIs for rational pseudotangle words. 

\begin{proposition} \label{SI} The following statements are true for rational tangle words $(a_1,\ldots,a_n)$. Similar statements can also be made for rational pseudotangle words.
\begin{itemize}
\item[(1)] The first syllable $a_1$ is an NSI syllable. 
\bigskip
\item[(2)] The second syllable $a_2$ is an SI syllable if and only if $a_1$ is even.
\bigskip
\item[(3)] If $a_i$ is an SI syllable, then $a_{i+1}$ is an NSI syllable.
\bigskip
\item[(4)] If $a_i$ is an SI syllable, then $a_{i+2}$ is an SI syllable if and only if $a_{i+1}$ is even.
\bigskip
\item[(5)] If both $a_i$ and $a_{i+1}$ are NSI syllables and $a_{i+1}$ is odd, then $a_{i+2}$ is an SI syllable.
\bigskip
\item[(6)] If both $a_i$ and $a_{i+1}$ are NSI syllables and $a_{i+1}$ is even, then $a_{i+2}$ is an NSI syllable. 
\end{itemize}
\end{proposition}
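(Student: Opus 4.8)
The plan is to keep track, throughout the twisting construction, of the \emph{connectivity type} of the partial (pseudo)tangle built so far. A $2$-string tangle partitions its four endpoints NW, NE, SW, SE into two strands, and a four-element set splits into two pairs in only three ways, so there are exactly three connectivity types: the \emph{vertical} type $\{NW,SW\},\{NE,SE\}$ (that of the tangle $(0)$), the \emph{horizontal} type $\{NW,NE\},\{SW,SE\}$, and the \emph{diagonal} type $\{NW,SE\},\{NE,SW\}$ (that of the tangle $(1)$). I would encode the type by a pair of bits $(\beta,\rho)$ in which $\beta=1$ means the bottom endpoints SW, SE lie on one strand and $\rho=1$ means the right endpoints NE, SE lie on one strand; the three types are $(0,1)$, $(1,0)$, and $(0,0)$, and $(1,1)$ cannot occur since no single strand meets three of the four endpoints. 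Building syllable $a_i$ of $(a_1,\ldots,a_n)$ means twisting a fixed pair of endpoints around each other $|a_i|$ times: the bottom pair when $i$ is odd, the right pair when $i$ is even. The construction begins from the tangle $(0)$, of type $(\beta,\rho)=(0,1)$.

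Next I would prove a pair of \emph{twist rules} describing both how a syllable classifies its crossings and how it updates $(\beta,\rho)$, obtained by inspecting the effect of one half-twist on strand connectivity: twisting two endpoints on the \emph{same} strand just adds a kink, so each resulting crossing is a self-intersection and the type is unchanged, whereas twisting two endpoints on \emph{different} strands produces a non-self-intersection and transposes those endpoints in the pairing. Hence a bottom-twist syllable with $\beta=1$ on entry is an SI syllable fixing $(\beta,\rho)$, while one with $\beta=0$ on entry keeps $\beta=0$ throughout — so all its crossings are NSIs — and toggles $\rho$ once per crossing, its net effect being to flip $\rho$ iff $|a_i|$ is odd. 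The right-twist rule is the same after exchanging $\beta\leftrightarrow\rho$ and the vertical and horizontal types. Two consequences I would record: each syllable is homogeneous (entirely SIs or entirely NSIs), and syllable $a_i$ is an SI syllable iff its \emph{relevant bit} $c_i$ — namely $\beta$ if $i$ is odd and $\rho$ if $i$ is even, read on the tangle entering the syllable — equals $1$. I expect establishing these twist rules (the precise effect of a half-twist on connectivity) to be the main obstacle; it is ``clear from a picture'' but easy to get slightly wrong, and everything afterward is bookkeeping. The identical argument with $|a_i|$ replaced by the total crossing count $|a_i|+|b_i|$ gives the pseudotangle versions, and a zero-length syllable contributes nothing and is harmless.

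Finally I would deduce the six statements. (1): $a_1$ is a bottom-twist syllable entered with $\beta=0$, hence NSI. (2): after $a_1$ one still has $\beta=0$, and $\rho=1$ iff $|a_1|$ is even; $a_2$ is a right-twist syllable, so it is SI iff $\rho=1$ iff $a_1$ is even. For (3)--(6), track $c_i$. If $a_i$ is SI then $c_i=1$, so the other bit is $0$; since an SI syllable changes nothing and that other bit is exactly the bit relevant to $a_{i+1}$, we get $c_{i+1}=0$, so $a_{i+1}$ is NSI — this is (3). Moreover the bit relevant to both $a_i$ and $a_{i+2}$ started at $1$ and is unchanged through $a_i$, and the NSI syllable $a_{i+1}$ toggles it, leaving it $1$ iff $|a_{i+1}|$ is even; as this is $c_{i+2}$, statement (4) follows. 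If instead $a_i$ and $a_{i+1}$ are both NSI, then $c_i=c_{i+1}=0$: the NSI syllable $a_i$ keeps its relevant bit $0$ and toggles the other bit to $c_{i+1}=0$, and then the NSI syllable $a_{i+1}$ keeps that bit $0$ and toggles the remaining bit (which is $0$ on entry), so the bit relevant to $a_{i+2}$ ends up $1$ iff $|a_{i+1}|$ is odd; thus $a_{i+2}$ is SI when $a_{i+1}$ is odd and NSI when $a_{i+1}$ is even, giving (5) and (6). Each of (3)--(6) is run once for $i$ odd and once for $i$ even, but the two runs are identical after swapping $\beta\leftrightarrow\rho$, so nothing new is needed.
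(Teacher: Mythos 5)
Your proof is correct, and it formalizes the argument quite differently from the paper. The paper colors the strand through the NW endpoint blue, and proves (3)--(6) by an exhaustive case analysis on where the second blue endpoint of the preceding tangle $L=(a_1,\ldots,a_{i-1})$ sits (SW, NE, or SE), subdivided by the parities of $a_i$ and $a_{i+1}$ --- ten configurations in all, each verified by a figure. Your two-bit state $(\beta,\rho)$ records exactly the same information (the three admissible values $(0,1)$, $(1,0)$, $(0,0)$ correspond to the paper's three locations of the second blue endpoint), but instead of checking configurations one by one you prove the transition rule once, at the level of a single half-twist, and then derive all six statements by bookkeeping; I verified that each of your deductions (1)--(6) follows correctly from the stated twist rules. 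What your route buys: the ten figures collapse to one local lemma; you get as a byproduct that every syllable is homogeneous (entirely SIs or entirely NSIs), a fact that Definition~\ref{tangleNSI} and Proposition~\ref{NSIs} quietly presuppose but the paper never isolates; and the pseudotangle version is immediate, since connectivity is blind to how crossings are resolved. What the paper's route buys is that its figures directly exhibit the one genuinely geometric fact --- that a half-twist of two endpoints lying on different strands creates an NSI and transposes those endpoints in the strand pairing, while a half-twist of two endpoints on the same strand creates an SI and changes nothing --- which is precisely the step you flag as the remaining obstacle. That step does still require justification (a picture of a single crossing, or a short combinatorial argument about which arcs meet at it, suffices), but it is a single local check rather than a global case analysis, so your reduction is sound and arguably cleaner.
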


\begin{proof} Condition~(1) is true by Definition~\ref{tangleNSI} because the crossings of the first syllable $a_1$ are necessarily between the two strands of the rational tangle. 
The proof of the forward direction of Condition~(2) proceeds by contraposition and follows from the left side of Figure~\ref{SI2combined}. The proof of the reverse direction of Condition~(2) follows from the right side of Figure~\ref{SI2combined}. 

\begin{figure}
\centering
\includegraphics[width=5.25in]{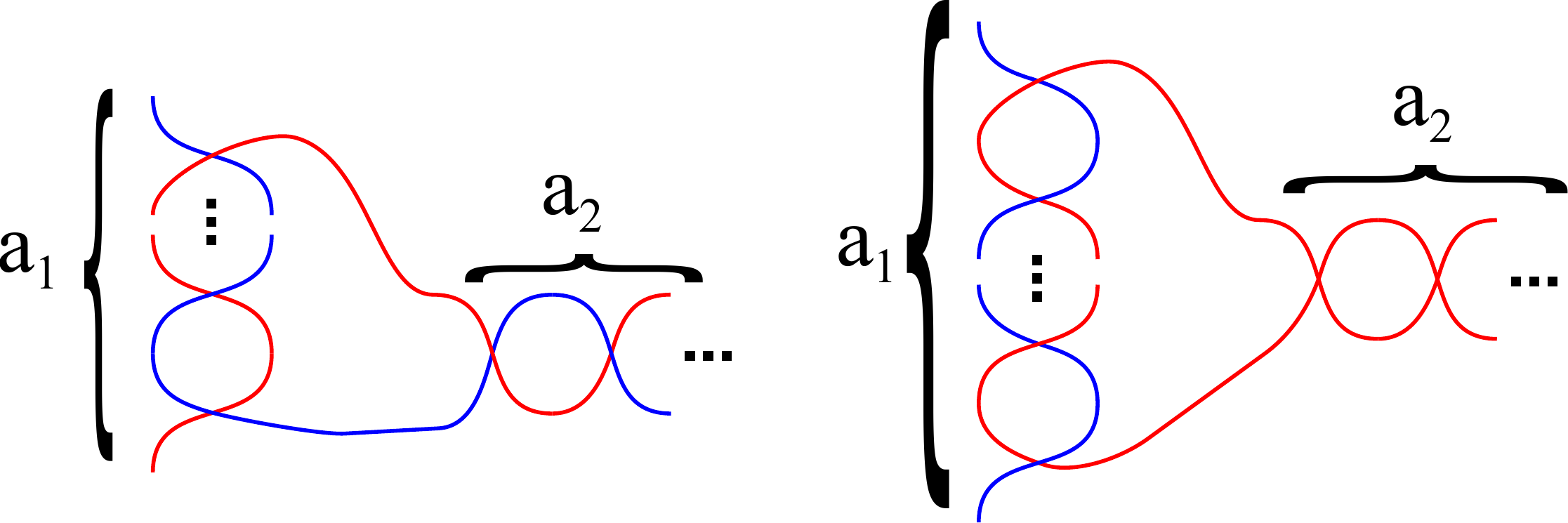}
\caption{The case where $a_1$ is odd (left) and the case where $a_1$ is even (right).}
\label{SI2combined}
\end{figure}



For the remainder of this proof, let $L$ denote the rational tangle $(a_1,a_2,\ldots,a_{i-1})$ that precedes the syllable $a_i$. Note that when we construct a rational tangle, the northwest endpoint of $L$ remains fixed. Let us assume that the strand of $L$ incident to this endpoint is colored blue and assume that the other strand of $L$ is colored red. Furthermore, we can assume that $a_i$ is a horizontal twist without loss of generality, as we can create the case where $a_i$ is a vertical twist by reflecting the diagram over the line $y=-x$.

To prove Condition~(3), assume $a_i$ is an SI syllable. Then, as shown in Figure~\ref{SI3}, the next syllable $a_{i+1}$ will be an NSI syllable. 

For the remaining conditions (Condition~(4), Condition~(5), and Condition~(6)), we will consider cases. First, we have three cases depending on the location of the second endpoint of the blue strand in $L$. Second, for each case, we have subcases that arise from considering the parities of $a_i$ and $a_{i+1}$. Figure~\ref{BSW}, Figure~\ref{BNE}, and Figure~\ref{BSE} show these 10 total cases, grouped by the location of the second endpoint of the blue strand of $L$.

To prove Condition~(4), assume $a_i$ is an SI syllable. The proof of the forward direction of Condition~(4) proceeds by contraposition and follows from the left side of Figure~\ref{BSW}. The proof of the reverse direction of Condition~(4) follows from the right side of Figure~\ref{BSW}. 

To prove Condition~(5) and Condition~(6) (combined), assume both $a_i$ and $a_{i+1}$ are NSI syllables. The proof of these conditions follows from the top two subfigures of Figure~\ref{BNE} and the bottom two subfigures of Figure~\ref{BSE}. 

\begin{figure}
\centering
\includegraphics[width=1.75in]{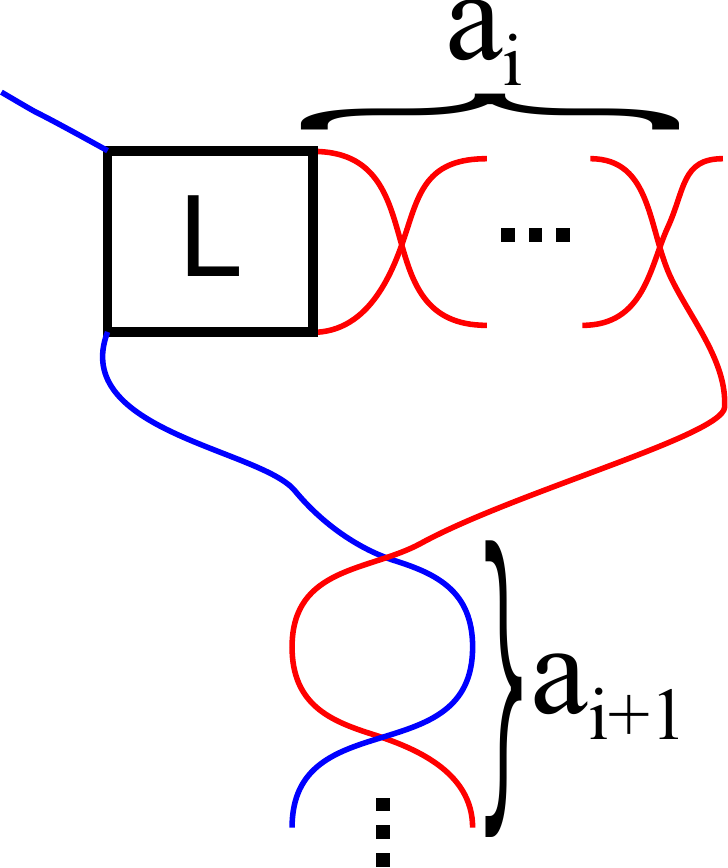}
\caption{A self-intersection syllable followed by a non-self-intersection syllable.}
\label{SI3}
\end{figure}

\begin{figure}
\centering
\includegraphics[width=6in]{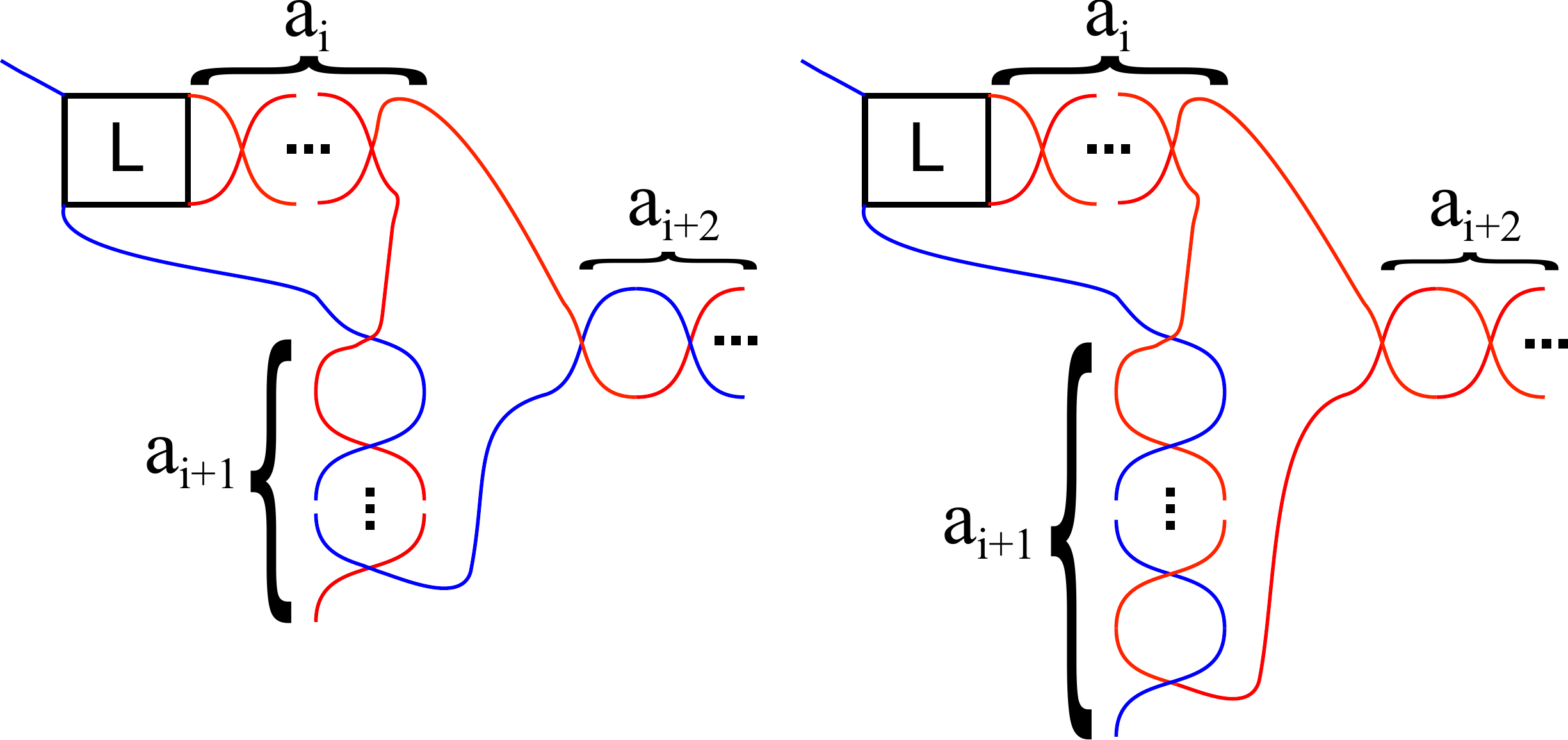}
\caption{The two cases when the southwest endpoint of $L$ is blue.}
\label{BSW}
\end{figure}

\begin{figure}
\centering
\includegraphics[width=6in]{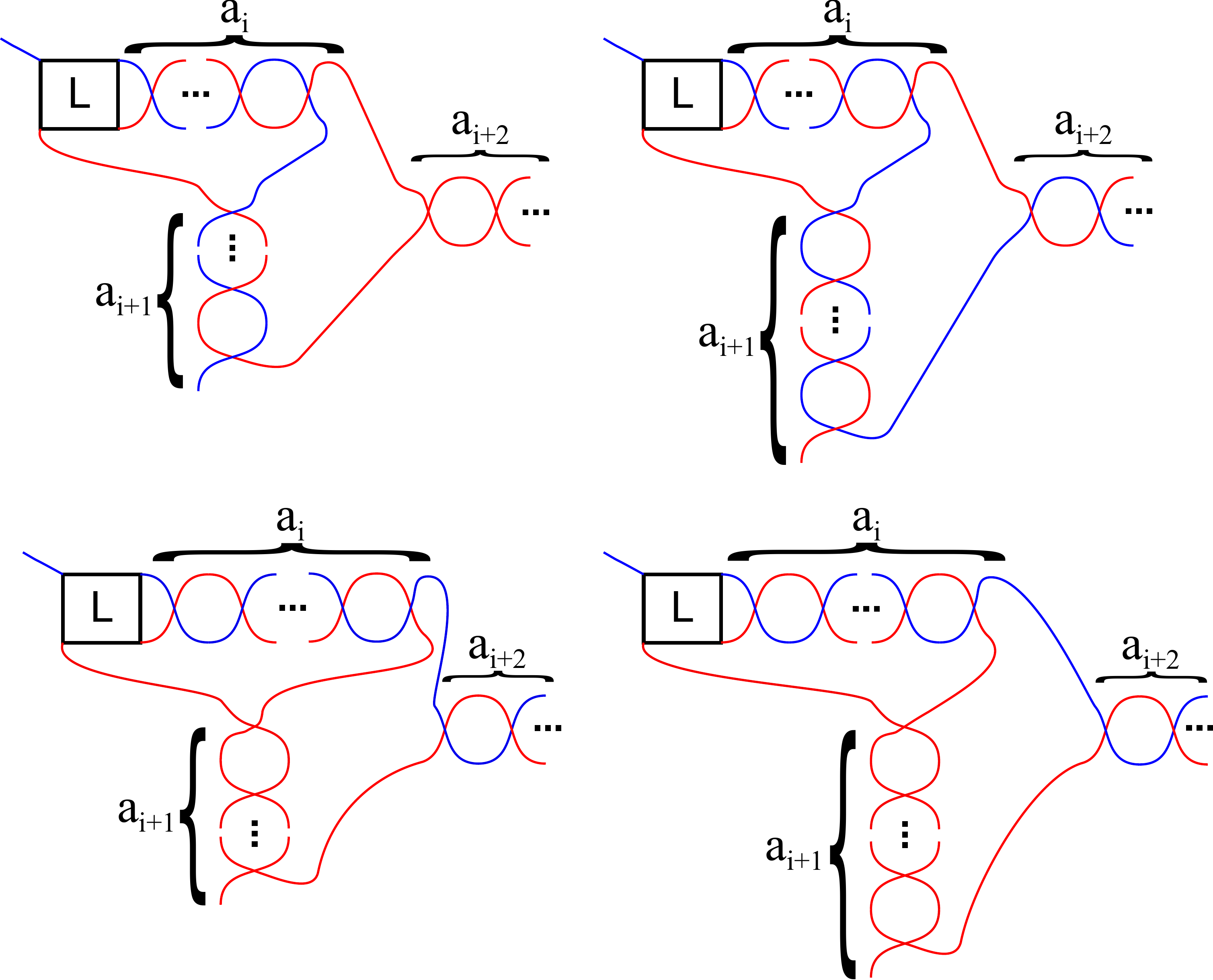}
\caption{The four cases when the northeast endpoint of $L$ is blue.}
\label{BNE}
\end{figure}

\begin{figure}
\centering
\includegraphics[width=6in]{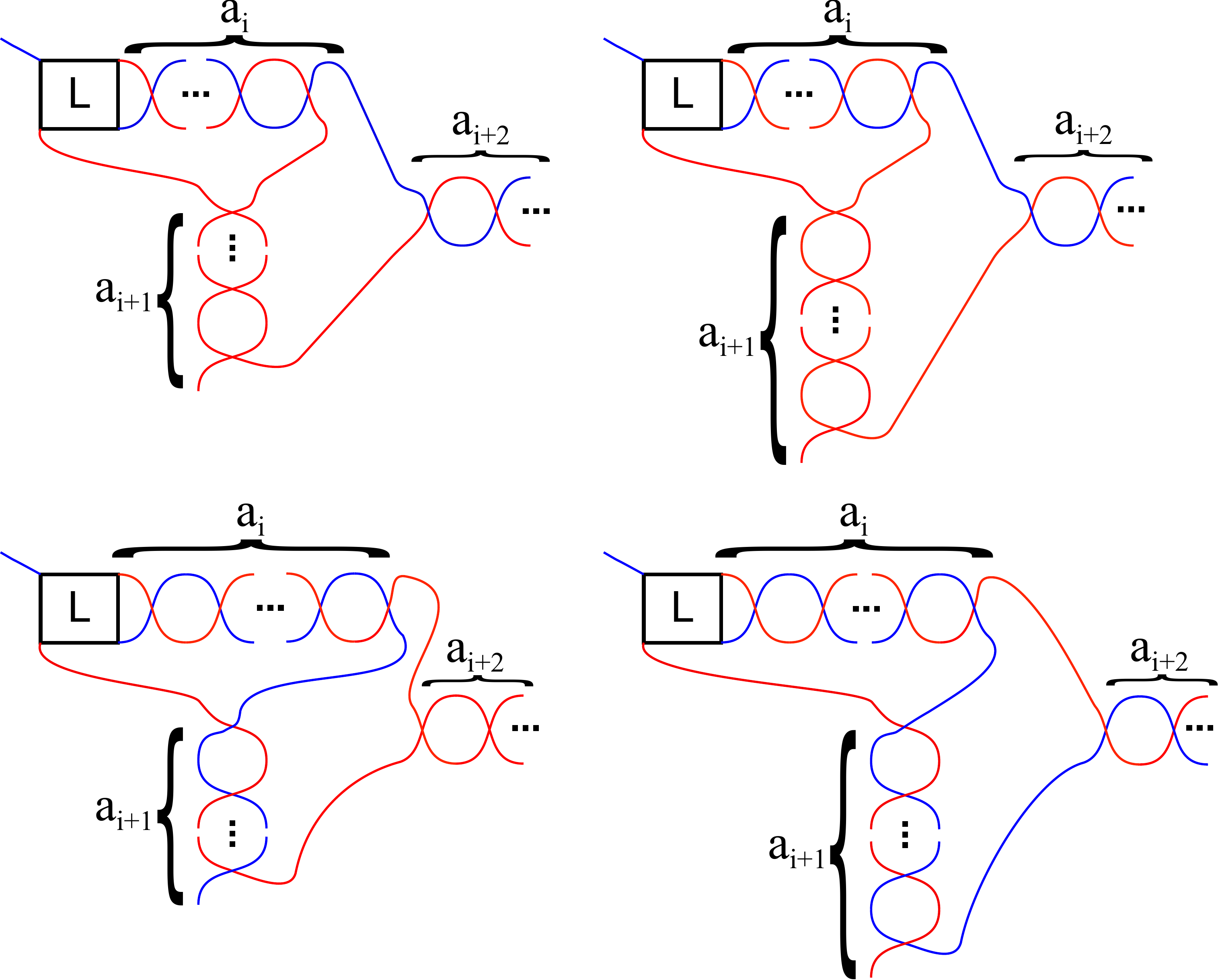}
\caption{The four cases when the southeast endpoint of $L$ is blue.}
\label{BSE}
\end{figure}
\end{proof}

We now present notation used to highlight the presence of isolated SI syllables in rational tangle words. 

\begin{notation}
Let $a$ denote a syllable of a rational tangle word. We use an asterisk $^{*}$ to indicate that the syllable consists of SIs. Thus, a syllable denoted by $a^{*}$ means that the syllable $a$ is an SI syllable.
\end{notation}

Given Proposition~\ref{SI}, we now present a result that leads to a method to decompose any rational pseudotangle word into SI syllables and strings of NSI syllables. 

\begin{proposition} \label{NSIs}
Every rational tangle word can be decomposed into strings of NSI syllables that alternate with isolated SI syllables. Furthermore, 
\begin{enumerate}
\item[(1)] all but the last string of NSI syllables consists of 
\begin{enumerate}
\item[(a)] a single even syllable, 
\item[(b)] two consecutive odd syllables, or 
\item[(c)] an odd syllable followed by an arbitrary nonempty string of even syllables followed by a final odd syllable. 
\end{enumerate}
\item[(2)] if the rational tangle contains an even number of NSIs, then the last string of NSI syllables consists of 
\begin{enumerate}
\item[(a)] a single even syllable, 
\item[(b)] two consecutive odd syllables, or 
\item[(c)] an odd syllable followed by an arbitrary nonempty string of even syllables followed by a final odd syllable.
\end{enumerate}
\item[(3)] if the rational tangle contains an odd number of NSIs, then the last string of NSI syllables consists of 
\begin{enumerate}
\item[(a)] a single odd syllable, or 
\item[(b)] an odd syllable followed by an arbitrary nonempty string of even syllables.
\end{enumerate}
\end{enumerate}
A similar statement can also be made for rational pseudotangle words.
\end{proposition}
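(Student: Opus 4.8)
The plan is to build the decomposition iteratively by walking through the syllables of the rational tangle word from left to right, using Proposition~\ref{SI} at each step to determine whether the next syllable is an SI syllable or an NSI syllable, and recording where the SI syllables fall. By Proposition~\ref{SI}(1), the word begins with at least one NSI syllable, so the word naturally starts a string of NSI syllables. By Proposition~\ref{SI}(3), an SI syllable is always immediately followed by an NSI syllable, which gives the claimed alternation: SI syllables never appear consecutively, so they are isolated, and between any two consecutive SI syllables (and before the first and after the last) there is a maximal nonempty string of NSI syllables.

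The next step is to analyze the possible internal structure of a maximal string of NSI syllables that is \emph{not} the last one, i.e.\ one that is terminated on the right by an SI syllable. Suppose $a_i,a_{i+1},\ldots,a_j$ is such a string (so $a_{i-1}$, if it exists, is an SI syllable, or $i=1$, and $a_{j+1}$ is an SI syllable). Since $a_{j+1}$ is an SI syllable, exactly one of the following produced it: either $a_j$ is an SI syllable (impossible, it's in the NSI string), or by Proposition~\ref{SI}(4) $a_{j-1}$ is an SI syllable and $a_j$ is even (so the string has length one and is a single even syllable, case (a)), or by Proposition~\ref{SI}(5) $a_{j-1}$ and $a_j$ are both NSI syllables and $a_j$ is odd. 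In the last situation we then look one step further back inside the NSI string: if $a_{j-1}$ is the first syllable of the string then the string is $a_{j-1},a_j$ with $a_j$ odd, and I would argue (by tracing the same conditions, or by noting that whatever precedes the string forces the first NSI syllable to be odd in this configuration) that $a_{j-1}$ is also odd, giving case (b); otherwise $a_{j-2}$ is also an NSI syllable, and by Proposition~\ref{SI}(6) applied to the transition $a_{j-2},a_{j-1}\to a_j$ being an NSI syllable — wait, $a_j$ is an NSI syllable — we get that $a_{j-1}$ must be even. Iterating Proposition~\ref{SI}(6) backwards through the middle of the string shows every interior syllable is even, and then the leftmost syllable of the string must be odd (otherwise, combined with Proposition~\ref{SI}(2) or (6) at the left boundary, the string would not have started where it did). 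This yields case (c): odd, then a nonempty run of evens, then odd. Handling the length-two and length-one degenerate cases carefully is where the bookkeeping is heaviest; I expect the main obstacle to be verifying the boundary behavior at the \emph{left} end of each NSI string, since ``the string started here'' must be converted into a parity statement about its first syllable, and this requires splitting into the sub-cases $i=1$ (use Proposition~\ref{SI}(1) and (2)) versus $i>1$ with $a_{i-1}$ an SI syllable (use Proposition~\ref{SI}(4)).

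For part (2) and part (3), I would observe that the last string of NSI syllables $a_m,\ldots,a_n$ is not terminated on the right by an SI syllable — it runs to the end of the word — so the argument above does not force $a_n$ to be odd. Instead I would invoke Proposition~\ref{ratevenNSI} and its relationship to the total NSI count: running the same backwards analysis through the last string, the interior syllables are still forced to be even by Proposition~\ref{SI}(6), and the leftmost syllable is still forced to be odd by the left-boundary analysis, but the rightmost syllable $a_n$ is now unconstrained in parity. If $a_n$ is odd we land in the same three cases (a), (b), (c) as before; if $a_n$ is even we land in the two cases of part (3): a single odd syllable (if the string has length one — but then it is simultaneously odd, contradiction unless it has length one and is odd, i.e.\ case 3(a)), or an odd syllable followed by a nonempty run of evens (case 3(b)). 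Finally I would reconcile the parity of $a_n$ with the parity of the total number of NSIs: summing the syllable lengths in all the NSI strings and using that each completed (non-final) NSI string contributes an even number of NSIs — check: an even syllable contributes even; two odd syllables contribute even; odd+evens+odd contributes even — the total NSI count has the same parity as the length-parity of the last string, which is controlled by whether $a_n$ is odd or even. Thus an even total forces $a_n$ odd (part (2)) and an odd total forces $a_n$ even (part (3)), completing the case split. The statement for rational pseudotangle words follows because resolving or unresolving crossings does not change which strands meet at a crossing, so the SI/NSI classification of syllables is identical; I would state this in one sentence at the end.
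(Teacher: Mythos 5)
Your overall route is sound and genuinely different from the paper's: the paper walks each string left to right (splitting into first versus non-first strings and applying Proposition~\ref{SI} forward, then a lengthy case analysis of the final string keyed to the parity of the NSI count), whereas you analyze each non-final string from its right end, using the biconditionals in Conditions~(2) and~(4) and the dichotomy (5)/(6) in contrapositive form to force the last syllable of the string to be odd (or the string to be a single even syllable), the interior syllables to be even, and the first syllable to be odd via the left-boundary subcases you identify. That backward argument is correct, modulo the small point that the step ``$a_j$ NSI with $a_{j-2},a_{j-1}$ NSI forces $a_{j-1}$ even'' is the contrapositive of Condition~(5), not an application of Condition~(6); since (5) and (6) exhaust the two parities this is harmless, but it should be cited as such.

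The genuine problem is in your treatment of parts (2) and (3). Your closing claim, ``an even total forces $a_n$ odd (part (2)) and an odd total forces $a_n$ even (part (3)),'' is false when the final string consists of a single syllable: for the tangle word $(2)$ the total number of NSIs is even, yet $a_n=2$ is even and the final string is the single even syllable of case 2(a); for $(3)$ the total is odd and $a_n$ is odd, which is case 3(a). The source of the slip is that the parity of the number of NSIs contributed by the final string equals the parity of $a_n$ when that string has one syllable, but is \emph{opposite} to the parity of $a_n$ when it has two or more syllables, because the forced leading odd syllable flips it. The same confusion appears earlier, where ``if $a_n$ is odd we land in the same three cases (a), (b), (c) as before'' fails for a one-syllable final string (a single odd syllable is case 3(a), not 2(b)), and the parenthetical about a ``contradiction'' in the length-one case does not resolve it. The fix is easy with what you already have: split on the length of the final string. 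If it has one syllable, its parity equals the parity of the total NSI count, giving 2(a) or 3(a); if it has at least two syllables, your structural result (odd, then evens, then $a_n$ of unconstrained parity) shows its NSI count is even exactly when $a_n$ is odd, giving 2(b)/2(c) versus 3(b). As written, however, the assertion in your final sentence is wrong in the one-syllable case, so parts (2) and (3) do not go through without this repair.
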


\begin{proof} 
By Condition (3) of Proposition~\ref{SI}, no two SI syllables can be adjacent. Thus, the strings of NSI syllables alternate with isolated SI syllables. 

We will now prove Condition (1). Suppose we have a non-final string of NSI syllables. We want to show that this string consists of a single even syllable or an odd syllable followed by an arbitrary (possibly empty) string of even syllables followed by a final odd syllable. In Case~1 and Case~2 below, we consider the first non-final string of NSI syllables, whose first syllable is $a_1$. By Condition~(1) of Proposition~\ref{SI}, we know that $a_1$ is an NSI syllable. 

\bigskip

\noindent \textbf{\underline{Case 1}:} Suppose $a_1$ is even. Then Condition (2) of Proposition~\ref{SI} implies that $a_2$ is an SI syllable, so the first non-final string of NSI syllables consists of the single even syllable $a_1$.

\bigskip

\noindent \textbf{\underline{Case 2}:} Suppose $a_1$ is odd. Then Condition~(2) of Proposition~\ref{SI} implies that $a_{2}$ is an NSI syllable. If $a_{2}$ is odd, then Condition (5) of Proposition~\ref{SI} implies that $a_3$ is an SI syllable, which gives a string of NSI syllables composed of two odd syllables. If $a_2$ is even, then Condition~(6) of Proposition~\ref{SI} implies that $a_3$ is an NSI syllable. This string of even NSI syllables continues by repeatedly applying Condition~(6) of Proposition~\ref{SI}. This string of even NSI syllables must eventually terminate, however, since this is a non-final string of NSI syllables. Therefore, we eventually find an odd syllable in this string of NSI syllables, call this syllable $a_k$. This syllable $a_k$ is the last syllable in this string of NSI syllables because the next syllable $a_{k+1}$ is an SI syllable by Condition~(5) of Proposition~\ref{SI}. Thus, the first non-final string of NSI syllables consists of an odd syllable $a_1$ followed by an arbitrary (possibly empty) string $a_2, \ldots, a_{k-1}$ of even syllables followed by a final odd syllable $a_k$.

\bigskip

In Case~3 and Case~4 below, we consider a non-first non-final string of NSI syllables. Let $a_i$ denote the first syllable in this string of NSI syllables. 

\bigskip

\noindent \textbf{\underline{Case 3}:} Suppose $i>1$ and $a_i$ is even. Since $a_i$ is the first NSI syllable in a non-first string of NSI syllables, we know that $a_{i-1}$ is an SI syllable. By Condition (4) of Proposition~\ref{SI}, $a_{i+1}$ is an SI syllable, so the non-first non-final string of NSI syllables consists of the single even syllable $a_i$.

\bigskip

\noindent \textbf{\underline{Case 4}:} Suppose $i>1$ and $a_i$ is odd. Since $a_i$ is the first NSI syllable in a non-first string of NSI syllables, we know that $a_{i-1}$ is an SI syllable. Then Condition (4) of Proposition~\ref{SI} implies that $a_{i+1}$ is an NSI syllable. The remainder of this case is similar to Case 2, except that $a_{i+1}$ is now playing the role of $a_2$. 

\bigskip

This completes the proof of Condition~(1). To prove Condition~(2) and Condition~(3), we will consider the final string of NSI syllables. Let $a_i$ be the first syllable in this final string of NSI syllables. We will consider two cases based on the parity of the NSIs.

\bigskip

To prove Condition~(2), suppose there are an even number of NSIs in the tangle word. Condition~(1) of this proposition implies that each non-final string of NSI syllables contains an even number of NSIs. Thus, the final string of NSI syllables must contain an even number of NSIs for the tangle word to contain an even total number of NSIs. 

\bigskip

\noindent \textbf{\underline{Case A}:} Suppose $a_i$ is even. If the final string of NSI syllables consists of the syllable $a_i$ alone, then we have the desired result. We may now assume that the final string of NSI syllables contains a second syllable $a_{i+1}$. If the final string of NSI syllables is the only string of NSI syllables in the tangle word, then $a_i=a_1$, $a_{i+1}=a_2$, and the argument from Case~1 gives the desired result. Now suppose there are at least two strings of NSI syllables in the tangle word. Then the argument from Case~3 gives the desired result.   

\bigskip

\noindent \textbf{\underline{Case B}:} Suppose $a_i$ is odd. If the final string of NSI syllables consists of the syllable $a_i$ alone, then we have a contradiction of the fact that the final string of NSI syllables contains an even number of NSIs. We may now assume that the final string of NSI syllables contains a second syllable $a_{i+1}$. 

\bigskip

\noindent \textbf{\underline{Subcase 1}:} Suppose the final string of NSI syllables is the only string of NSI syllables in the tangle word. Then $a_i=a_1$ and $a_{i+1}=a_2$. 

Suppose $a_2$ is odd. If the final string of NSI syllables contains only $a_1$ and $a_2$, then we have the desired result. If the final string of NSI syllables contains a third syllable $a_{3}$, then the argument from Case~2 gives the desired result. 

Suppose $a_2$ is even. If the final string of NSI syllables contains only $a_1$ and $a_2$, then we have a contradiction of the fact that the final string of NSI syllables contains an even number of NSIs. If the final string of NSI syllables contains a third syllable $a_{3}$, then the argument from Case~2 implies that $a_1$ is followed by a nonempty string of even syllables. Eventually, an odd syllable must occur because the total number of NSIs in the final string of NSI syllables must be even. Then either this odd syllable is the last syllable of the tangle word or the next syllable is an SI syllable by Condition~(5) of Proposition~\ref{SI}. In either case, we have the desired result. 

\bigskip

\noindent \textbf{\underline{Subcase 2}:} Now suppose there are at least two strings of NSI syllables in the tangle word. 

Suppose $a_{i+1}$ is odd. If the final string of NSI syllables contains only $a_i$ and $a_{i+1}$, then we have the desired result. If the final string of NSI syllables contains a third syllable $a_{i+2}$, then the argument from Case~4 gives the desired result. 

Suppose $a_{i+1}$ is even. If the final string of NSI syllables contains only $a_i$ and $a_{i+1}$, then we have a contradiction of the fact that the final string of NSI syllables contains an even number of NSIs. If the final string of NSI syllables contains a third syllable $a_{i+2}$, then the argument from Case~4 implies that $a_i$ is followed by a nonempty string of even syllables. Eventually, an odd syllable must occur because the total number of NSIs in the final string of NSI syllables must be even. Then either this odd syllable is the last syllable of the tangle word or the next syllable is an SI syllable by Condition~(5) of Proposition~\ref{SI}. In either case, we have the desired result. 

\bigskip

To prove Condition~(3), suppose there are an odd number of NSIs in the tangle word. Condition~(1) of this proposition implies that each non-final string of NSI syllables contains an even number of NSIs. Thus, the final string of NSI syllables must contain an odd number of NSIs for the tangle word to contain an odd total number of NSIs. 

\bigskip

\noindent \textbf{\underline{Case A}:} Suppose $a_i$ is even. If the final string of NSI syllables consists of the syllable $a_i$ alone, then we have a contradiction of the fact that the final string of NSI syllables contains an odd number of NSIs. We may now assume that the final string of NSI syllables contains a second syllable $a_{i+1}$. If the final string of NSI syllables is the only string of NSI syllables in the tangle word, then $a_i=a_1$, $a_{i+1}=a_2$, and the argument from Case~1 gives a contradiction of the fact that the final string of NSI syllables contains an odd number of NSIs. Now suppose there are at least two strings of NSI syllables in the tangle word. Then the argument from Case~3 gives a contradiction of the fact that the final string of NSI syllables contains an odd number of NSIs.   

\bigskip

\noindent \textbf{\underline{Case B}:} Suppose $a_i$ is odd. If the final string of NSI syllables consists of the syllable $a_i$ alone, then we have the desired result. We may now assume that the final string of NSI syllables contains a second syllable $a_{i+1}$. 

\bigskip

\noindent \textbf{\underline{Subcase 1}:} Suppose the final string of NSI syllables is the only string of NSI syllables in the tangle word. Then $a_i=a_1$ and $a_{i+1}=a_2$. 

Suppose $a_2$ is odd. If the final string of NSI syllables contains only $a_1$ and $a_2$, then we have a contradiction of the fact that the final string of NSI syllables contains an odd number of NSIs. If the final string of NSI syllables contains a third syllable $a_{3}$, then the argument from Case~2 gives a contradiction of the fact that the final string of NSI syllables contains an odd number of NSIs. 

Suppose $a_2$ is even. If the final string of NSI syllables contains only $a_1$ and $a_2$, then we have the desired result. If the final string of NSI syllables contains a third syllable $a_{3}$, then the argument from Case~2 implies that $a_1$ is followed by a nonempty string of even syllables which must terminate because the tangle word is finite. This gives the desired result. 

\bigskip

\noindent \textbf{\underline{Subcase 2}:} Now suppose there are at least two strings of NSI syllables in the tangle word. 

Suppose $a_{i+1}$ is odd. If the final string of NSI syllables contains only $a_i$ and $a_{i+1}$, then we have a contradiction of the fact that the final string of NSI syllables contains an odd number of NSIs. If the final string of NSI syllables contains a third syllable $a_{i+2}$, then the argument from Case~4 gives a contradiction of the fact that the final string of NSI syllables contains an odd number of NSIs. 

Suppose $a_{i+1}$ is even. If the final string of NSI syllables contains only $a_i$ and $a_{i+1}$, then we have the desired result. If the final string of NSI syllables contains a third syllable $a_{i+2}$, then the argument from Case~4 implies that $a_i$ is followed by a nonempty string of even syllables which must terminate because the tangle word is finite. This gives the desired result. 
\end{proof}

We now present an example of applying Proposition~\ref{NSIs} to decompose a rational tangle word into SI syllables and NSI syllables. 

\begin{example} Consider the rational tangle word $$(1,4,2,1,3,5,3,2,1,2,0,5,2,6,4).$$ Reading from left to right, we see that our first string of NSI syllables is $1,4,2,1$, as this is an odd syllable followed by a nonempty string of even syllables followed by a final odd syllable. The next syllable, 3, is an SI syllable. We now update our tangle word to get $$(1,4,2,1,3^*,5,3,2,1,2,0,5,2,6,4).$$ Proceeding to the right, we see that $5,3$ is the next string of NSI syllables, as this is a string of two consecutive odd syllables. The next syllable, 2, is an SI syllable, so we update our tangle word to get $$(1,4,2,1,3^*,5,3,2^*,1,2,0,5,2,6,4).$$ Continuing this process, we see that $1,2,0,5$ is the next string of NSI syllables and the following syllable, 2, is an SI syllable. Updating our tangle word, we get $$(1,4,2,1,3^*,5,3,2^*,1,2,0,5,2^*,6,4).$$ The final string of NSI syllables consists of the single even syllable 6, so our rational tangle word ends with the syllable 4, which is an SI syllable. Thus, our completely decomposed tangle word is given by
$$(1,4,2,1,3^*,5,3,2^*,1,2,0,5,2^*,6,4^*).$$ 
\end{example}

\section{The Linking-Unlinking Game}  \label{LUgame}

In this section, we will define the Linking-Unlinking Game and provide winning strategies for the Linking-Unlinking Game played on all shadows of 2-component rational tangle closures and played on a large family of general 2-component link shadows.

\subsection{Defining the Linking-Unlinking Game} \label{LUdef}

In what follows, we introduce the Linking-Unlinking Game and present two key player strategies that will be used often in the remainder of this paper.

\begin{definition} The \textbf{Linking-Unlinking Game} is a two-player game played on the shadow of a 2-component link. The game is played with each player taking turns resolving crossings of the link shadow. The game ends when all of the crossings are resolved and a 2-component link diagram is formed. One player, the \textbf{Linker}, wins if the resulting link diagram is unsplittable. The other player, the \textbf{Unlinker}, wins if the resulting link diagram is splittable.
\end{definition}

An example of the Linking-Unlinking Game is provided below. 

\begin{example} Figure~\ref{exgame} shows an example of the Linking-Unlinking Game being played on a shadow of the Whitehead link. Here, the Unlinker goes first. The second frame shows that the Unlinker decides to play on the central crossing. Next, the Linker plays on the upper left crossing, as shown in the third frame. In the fourth frame, the Unlinker responds on the lower left crossing, resolving the crossing with the intention of being able to reduce the two left crossings with an R2 move. Next, the Linker plays on the top right crossing, as shown in the fifth frame. Finally, the sixth frame shows that the Unlinker responds on the lower right crossing, wanting to be able to reduce the two right crossings with an R2 move. The link diagram at the end of the game is equivalent to the splittable trivial 2-component link diagram (also called the 2-component unlink diagram), so the Unlinker wins. Figure~\ref{ExGameFin} shows this equivalence.
\end{example}

\begin{figure}
    \centering
    \includegraphics[width=3.75in]{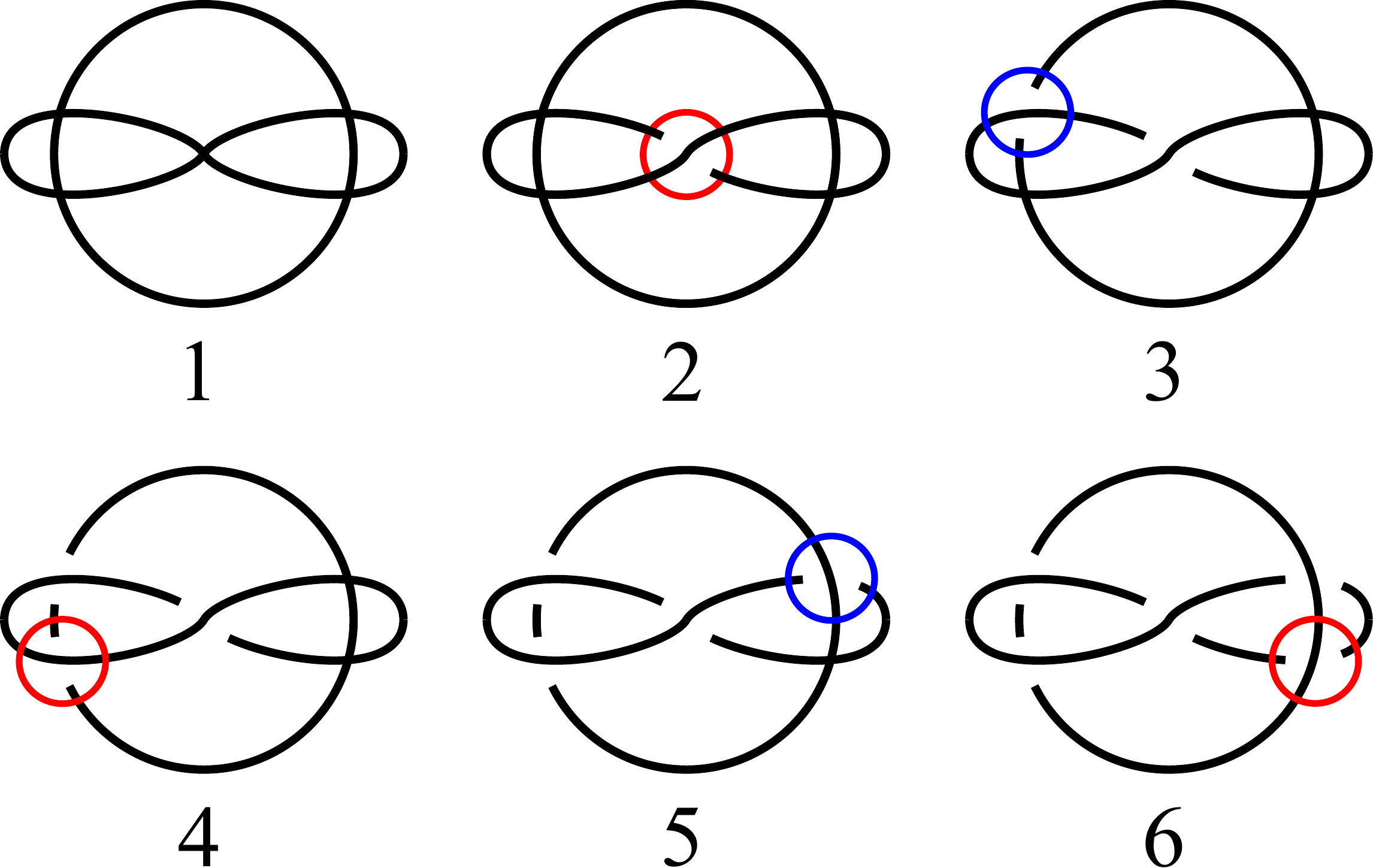}
    \caption{The Linking-Unlinking Game played on a shadow of the Whitehead link. Red circles denote Unlinker moves and blue circles denote Linker moves.}
    \label{exgame}
\end{figure}

\begin{figure}
    \centering
    \includegraphics[width=6in]{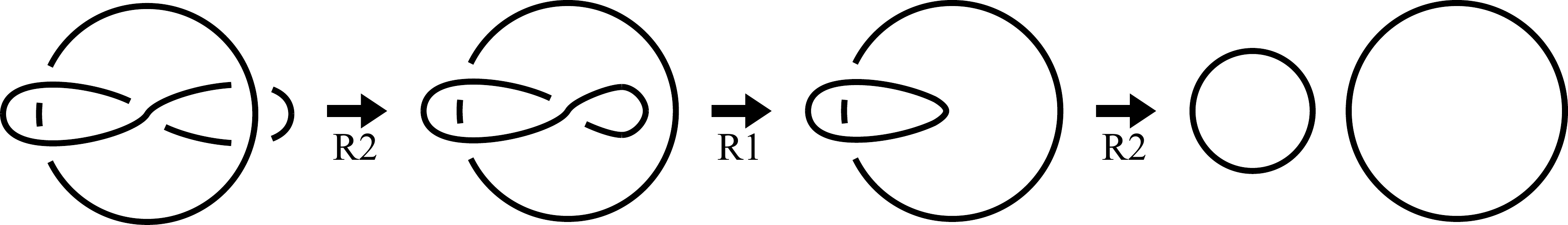}
    \caption{A sequence of Reidemeister moves used to show that the 2-component link diagram above on the left is splittable.}
    \label{ExGameFin}
\end{figure}

Given the definition of the Linking-Unlinking Game as a two-player combinatorial game, our main goal will now be to determine who wins and loses the game for various 2-component link shadows. 

\begin{definition}\label{winning}
In a two-player game, a player is said to have a \textbf{winning strategy} if there is always a sequence of moves that allows them to win, regardless of what their opponent does.
\end{definition}

Given the structure of rational pseudotangles, we will often group together crossings in a syllable of the pseudotangle word when creating winning strategies for the Linking-Unlinking Game. 

\begin{definition} \label{TR}
A \textbf{subtwist region} in a link pseudodiagram is a section of the associated link shadow consisting of a chain of bigons. A \textbf{twist region} in a link pseudodiagram is a section of the associated link shadow consisting of a maximal chain of bigons, maximal in the sense that the chain cannot be extended by adding more bigons. Note that a twist region consisting of a single crossing with no incident bigons is possible. 
\end{definition}

A chain of bigons can be thought of as being formed by twisting two parallel strands. Figure~\ref{Twist} provides a schematic depiction of a twist region. Given a subtwist region of a link pseudodiagram, we now define an operation called flyping that will be useful in defining two important player strategies for the Linking-Unlinking Game. 

\begin{figure}
    \centering
    \includegraphics[width=1.5in]{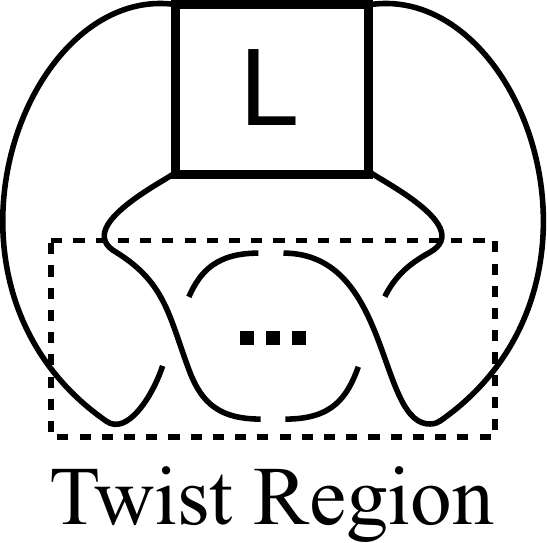}
    \caption{A schematic diagram of a twist region. The box labeled L represents the remainder of the link pseudodiagram.}
    \label{Twist}
\end{figure}

\begin{definition}
A \textbf{flype} is a move performed on a tangle of a link pseudodiagram that reflects the tangle over an axis. 
\end{definition} 

See Figure~\ref{flype} for an example of a flype. For our purposes, we will only consider flypes applied to subtwist regions of a tangle pseudodiagram. Below we present two key player strategies for the Linking-Unlinking Game that will be applied throughout the remainder of this paper. 

\begin{figure}
    \centering
    \includegraphics[width=2.25in]{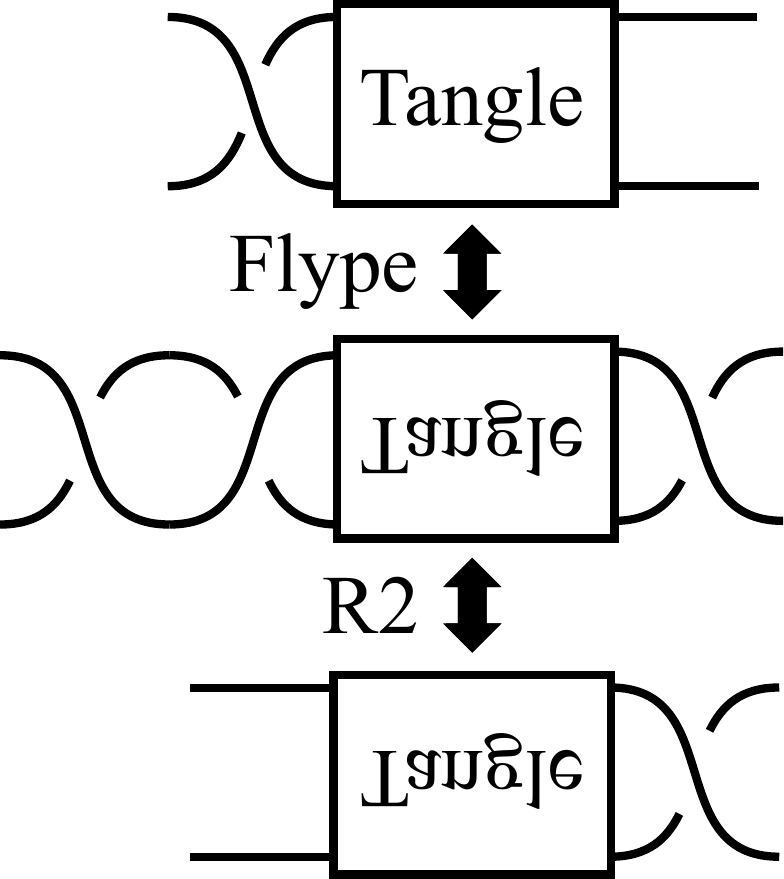}
    \caption{A schematic diagram of a flype applied to a tangle in a link pseudodiagram.}
    \label{flype}
\end{figure}

\begin{strategy}{\textbf{(R2 Strategy)}}
Assign the given link pseudodiagram an orientation. When one player plays by resolving a crossing in a twist region (consequently assigning this crossing a crossing sign), the next player will respond by resolving a crossing in the same twist region so that this crossing has crossing sign opposite to the previous resolved crossing. This allows both crossings to be reduced by applying a flype and R2 moves to the subtwist region between the two crossings, as shown in Figure~\ref{R2Strat}.
\end{strategy}

\begin{figure}
    \centering
    \includegraphics[width=3in]{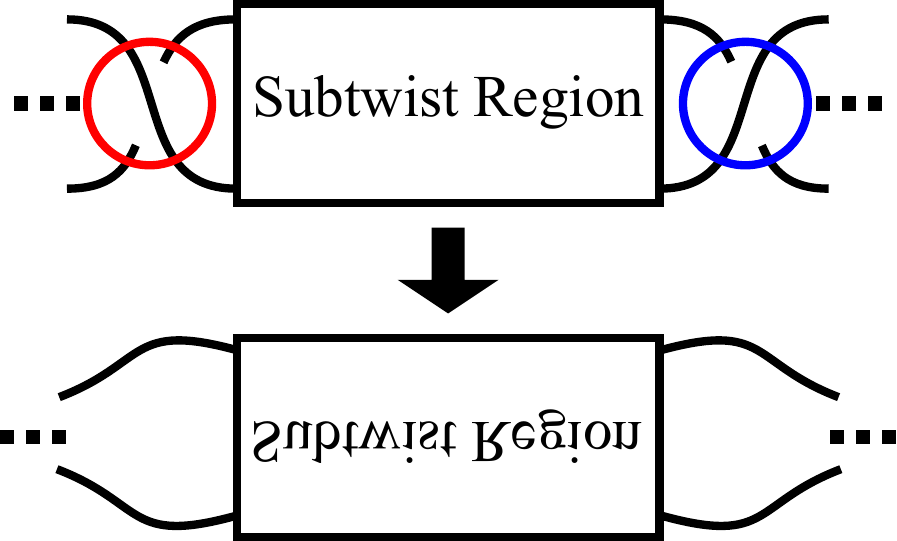}
    \caption{An example of the R2 strategy. One player resolves the crossing in the red circle and their opponent responds on the crossing in the blue circle. Applying a flype to the subtwist region between the two resolved crossings will produce two pairs of adjacent resolved crossings that can be removed by applying R2 moves.}
    \label{R2Strat}
\end{figure}

The R2 strategy is useful for both players, as it allows for the link diagram to be simplified by R2 moves at the end of the game. 

\begin{strategy}{\textbf{(Anti-R2 Strategy)}} Assign the given link pseudodiagram an orientation. Here, when one player plays by resolving a crossing in a twist region, the next player will respond by resolving a crossing in the same twist region so that this crossing has the same crossing sign as the previous resolved crossing. In this situation, the two crossings cannot be reduced by applying a flype to the subtwist region between the two crossings, as shown in Figure~\ref{AR2Strat}. Note that one or both of these crossings may be able to be reduced, however, by applying Reidemeister moves to the remainder of the link diagram at the end of the game.  
\end{strategy}

The anti-R2 strategy is particularly useful for the Linker, as it creates a clasp between the two strands of the twist region. As an example of why creating clasps might be useful for the Linker, consider the unsplittable Hopf link diagram in Figure~\ref{unsplit}. 

\begin{figure}
    \centering
    \includegraphics[width=3.5in]{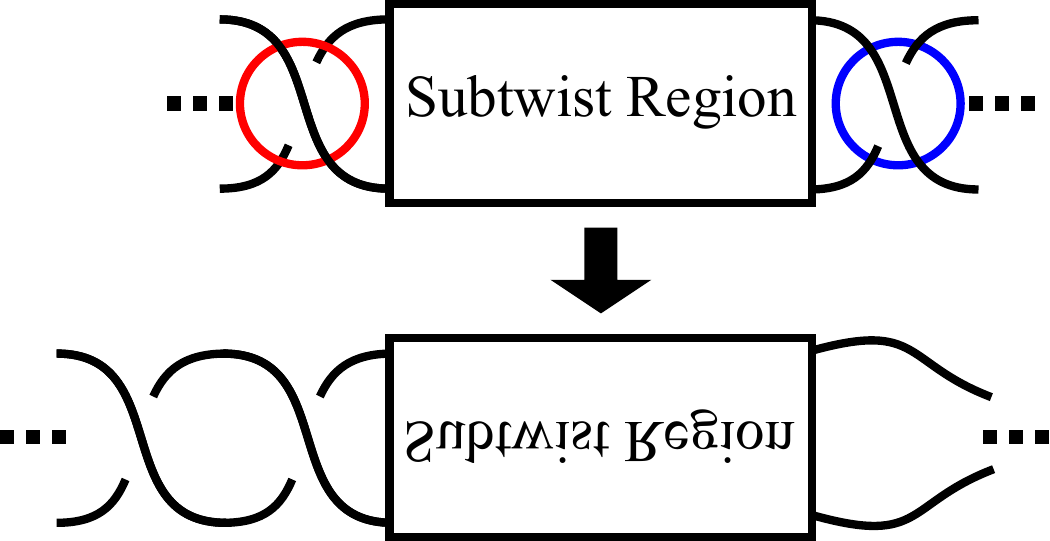}
    \caption{An example of the anti-R2 strategy. One player resolves the crossing in the red circle and their opponent responds on the crossing in the blue circle. Applying a flype and an R2 move to the subtwist region between the two resolved crossings will produce a clasp formed by two adjacent resolved crossings.}
    \label{AR2Strat}
\end{figure}

\subsection{The Linking-Unlinking Game for Rational 2-Component Link Shadows} \label{stratsratlink} 
 
In this section, we provide winning strategies for the Linking-Unlinking Game played on shadows of 2-component rational tangle closures. We begin by providing a lemma that will be used in the proofs of a number of theorems in the remainder of this paper. 

\begin{lemma} \label{Lemma}
The following statements are true. 
\begin{itemize}
\item[(1)] The rational tangle $(0)$ has a splittable 2-component link closure. 
\item[(2)] The rational tangle $(\pm 2)$ has an unsplittable 2-component link closure. 
\end{itemize}
\end{lemma}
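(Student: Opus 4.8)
The plan is to prove each statement by exhibiting an explicit closure and identifying the resulting $2$-component link diagram.

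For statement (1), note that $(0)$ consists of two disjoint vertical strands and contains zero NSIs, which is even, so Proposition~\ref{ratevenNSI} guarantees that one of its two closures is a $2$-component link diagram. Tracing the strands shows that the numerator closure of $(0)$ joins the two strands into a single unknotted circle, whereas the denominator closure produces two disjoint unknotted circles, i.e.\ the standard split diagram of the $2$-component unlink. Since this diagram is split, it is in particular splittable, which proves (1).

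For statement (2), I would first count non-self-intersections: both crossings of $(\pm 2)$ lie in its first (and only) syllable, which is an NSI syllable by Condition~(1) of Proposition~\ref{SI}, so $(\pm 2)$ contains exactly two NSIs. As this is even, Proposition~\ref{ratevenNSI} guarantees that one closure of $(\pm 2)$ is a $2$-component link diagram; tracing the strands as in part (1) shows the numerator closure is a single circle, so the $2$-component link diagram is the denominator closure, which one checks directly to be the Hopf link diagram (for $(+2)$) or its mirror image (for $(-2)$). To see it is unsplittable, orient the diagram and compute the linking number: since the NSIs of a rational pseudotangle remain NSIs of a $2$-component link closure, the two crossings of the twist region of $(\pm 2)$ are exactly the NSIs counted by the linking number, and because these two strands run co-oriented through the twist region after the denominator closure they receive the same crossing sign, giving a linking number of $\frac{1}{2}\bigl(2\cdot(\pm 1)\bigr) = \pm 1$. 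As a $2$-component link diagram with nonzero linking number is unsplittable (as noted following the definition of the linking number), the closure is unsplittable, proving (2).

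The main obstacle is the only genuinely delicate point: verifying that the two crossings of the twist region contribute with equal sign to the linking number, equivalently that the two strands are co-oriented (not counter-oriented) through the twist region in the denominator closure. This is cleanly settled by a small figure of the oriented denominator closure of $(2)$ with its two crossing signs labeled; all remaining steps are routine strand-tracing and reference to results already established.
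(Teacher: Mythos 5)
Your proposal is correct and follows essentially the same route as the paper: identify the denominator closure of $(0)$ as the trivial $2$-component link diagram (hence splittable) and the denominator closure of $(\pm 2)$ as the Hopf link diagram or its reflection, then conclude unsplittability from its linking number $\pm 1$. The extra bookkeeping via Proposition~\ref{ratevenNSI} and Proposition~\ref{SI} is harmless but not needed, since the paper simply reads these closures off Figure~\ref{RT0} and Figure~\ref{unsplit}.
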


\begin{proof}
To prove Statement~(1), observe from Figure~\ref{RT0} that the denominator closure of the tangle $(0)$ gives the trivial 2-component link diagram. This link diagram is clearly splittable. 

To prove Statement~(2), observe that the denominator closure of the tangle $(\pm 2)$ gives either the Hopf link diagram from Figure~\ref{unsplit} or its reflection. In either case, after orienting this link diagram, it can be seen that the linking number of the resulting oriented 2-component link diagram has absolute value $1$. Since this link diagram has a nonzero linking number, then it is unsplittable.
\end{proof}

We now begin our study of the Linking-Unlinking Game by focusing on certain families of rational 2-component link shadows. The following result presents winning strategies for the Unlinker in a very specific pathological case and for the second player in other cases. 

\begin{theorem} \label{Thm1}
Suppose we have a shadow of a rational two-component link coming from a closure of the rational tangle $(a_1,\ldots,a_n)$.
\begin{enumerate}
\item[(1)] If $a_{2k+1} = 0$ for all $k$, then the Unlinker wins. 
\item[(2)] If $a_{2k+1}\neq 0$ for at least one $k$ and all of the $a_i$ are even, then the second player has a winning strategy.
\end{enumerate}
\end{theorem}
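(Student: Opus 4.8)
The plan is to treat the two parts separately. Since the Linking--Unlinking Game is a finite game with no ties, for part (2) it suffices to show that the second player can force a splittable diagram when cast as the Unlinker and can force an unsplittable diagram when cast as the Linker, and I will give a different explicit strategy in each case. Throughout I read off which twist regions of the shadow consist of SIs and which of NSIs from Proposition~\ref{SI}: when every $a_i$ is even, Conditions (1)--(4) of Proposition~\ref{SI}, applied inductively, show that every odd-indexed syllable is an NSI syllable and every even-indexed syllable is an SI syllable, and that any twist region straddling several syllables (which happens only across a syllable equal to $0$) has a single common type and an even number of crossings. Part (1) is then immediate: the hypothesis $a_{2k+1}=0$ for all $k$ makes every crossing of the shadow an SI, and since resolving a crossing never changes whether it is an SI or an NSI, the $2$-component link diagram at the end of any play has no crossing between its two components; hence one component lies entirely in a single complementary region of the other and the diagram is split, so the Unlinker wins no matter who moves first.

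For part (2) with the second player in the role of the Unlinker, the second player uses the R2 Strategy: after the first player resolves a crossing in a twist region, the second player resolves another crossing in that same twist region with the opposite crossing sign. Because every twist region has an even number of crossings, this response is always available (after each of the second player's moves every twist region has an even number of unresolved crossings). At the end of the game the resolved crossings in each twist region cancel in pairs, so flypes and R2 moves reduce the diagram to the $2$-component-link closure of the tangle $(0)$, which is splittable by Lemma~\ref{Lemma}(1); hence the Unlinker wins.

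For part (2) with the second player in the role of the Linker, fix an orientation of the shadow and --- using $a_{2k+1}\neq 0$ for some $k$ and the parity discussion above --- fix one twist region $T^{*}$ all of whose crossings are NSIs and which contains an even number $2m\geq 2$ of crossings. The Linker again always answers in the same twist region as the first player's previous move, but in $T^{*}$ the Linker resolves the new crossing to have the same sign as the first player's \emph{first} move in $T^{*}$, while in every other twist region the Linker resolves the new crossing to have the sign opposite to that of the first player's move being answered. These responses are always legal and the first player makes the opening move in each twist region. At the end of the game the crossing signs sum to $0$ in every twist region other than $T^{*}$, whereas in $T^{*}$ the first player has resolved $m$ crossings (one of sign $\epsilon$, the first, and $m-1$ of unknown sign) and the Linker has resolved the remaining $m$ crossings all with sign $\epsilon$, so the signs in $T^{*}$ sum to $(m+1)\epsilon+s$ with $|s|\leq m-1$, which is nonzero. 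Since the NSIs of the diagram are exactly the crossings lying in the odd-indexed twist regions, the linking number of the resulting oriented $2$-component link diagram equals half the sum of signs in $T^{*}$, which is nonzero; so the diagram is unsplittable and the Linker wins. Together the two cases establish part (2).

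The step I expect to be the main obstacle is the Linker's strategy: the straightforward anti-R2 strategy does not suffice, because the first player can arrange for reinforcing clasps to cancel --- either inside a single NSI twist region with four or more crossings, or across two distinct NSI twist regions. The remedy is to confine the reinforcing behaviour to the single twist region $T^{*}$, to have the Linker commit to the sign of the first player's \emph{first} move there, and to neutralize every other twist region with the R2 Strategy; the argument then comes down to the elementary fact that $(m+1)\epsilon+s\neq 0$ whenever $\epsilon=\pm 1$ and $|s|\leq m-1$.
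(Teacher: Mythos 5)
Your proof is correct, but it diverges from the paper's in two of the three pieces, so a comparison is worth recording. For part (1), the paper reduces the tangle word to $(0)$ by repeatedly applying the equivalences of Proposition~\ref{tangle eq}, whereas you observe via Proposition~\ref{SI} that every crossing of the shadow is an SI and conclude that the final diagram, having no crossings between its components, is split regardless of how the crossings are resolved; this is shorter and is in fact the same argument the paper itself uses later for Statement~(1) of Theorem~\ref{thm6}. For the Unlinker-as-second-player half of part (2) you and the paper coincide exactly (R2 strategy in every twist region, reducing to the closure of $(0)$). For the Linker-as-second-player half the strategies differ: the paper plays the R2 strategy everywhere except that in one nonzero odd-indexed syllable it saves the final pair of crossings for the anti-R2 strategy, and then reduces the resulting tangle word to $(\pm2)$ and invokes Lemma~\ref{Lemma}; you instead have the Linker reinforce the sign of the opponent's first move throughout a single NSI twist region, cancel everywhere else, and close with the estimate $(m+1)\epsilon+s\neq 0$ for $|s|\leq m-1$, so that the final linking number is nonzero. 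Your version buys robustness --- it never needs the final diagram to simplify to a Hopf link, and it anticipates the pseudo-linking-number argument of Theorem~\ref{thm6} --- while the paper's version produces the explicit reduced tangle subwords $(0)$ and $(\pm2)$ that are reused verbatim in the syllable-by-syllable decomposition argument of the later rational-tangle theorem, where non-final NSI strings must literally reduce to $(0)$. Two minor points, neither of which affects correctness: your parenthetical that twist regions merge across syllables ``only across a syllable equal to $0$'' glosses over incidental bigons created by the closure arcs, but since your pairing can equally well be organized syllable by syllable this is harmless; and your diagnosis that a naive anti-R2 strategy fails for the Linker (because reinforcing clasps of opposite sign can cancel) is accurate and is precisely the failure mode both your fix and the paper's fix are designed to avoid.
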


\begin{proof} We begin by proving Statement~(1). Assume we have a shadow of a rational 2-component link coming from a closure of the rational tangle $(a_1,\ldots,a_n)$ where $a_{2k+1} = 0$ for all $k$. If $n=1$, then the tangle word is $(0)$. Now suppose that $n \geq 2$. Then the tangle word can be written in the form $(0, a_2, \ldots, 0, a_{2m})$ or $(0, a_2, \ldots, 0, a_{2m}, 0)$. If $m=1$, then Statement~3 of Proposition~\ref{tangle eq} implies that $a_2$ can be reduced to $0$ so that the tangle word is equivalent to $(0, 0)$ or $(0, 0, 0)$, both of which are equivalent to $(0)$ by applying Statement~0 of Proposition~\ref{tangle eq} either once or twice, respectively. If $m \geq 2$, then we can iteratively use Statement~3 followed by Statement~2 of Proposition~\ref{tangle eq} to reduce the tangle subwords $(0, a_{2k})$ to $(0, 0)$ and remove them from the tangle word. Eventually, what remains is either $(0, 0)$ or $(0, 0, 0)$, both of which are equivalent to $(0)$ by applying Statement~0 of Proposition~\ref{tangle eq} either once or twice, respectively. By Lemma~\ref{Lemma}, the Unlinker wins. 

We will now prove Statement~(2). Assume we have a shadow of a rational 2-component link coming from a closure of the rational tangle $(a_1,\ldots,a_n)$ where $a_{2k+1}\neq 0$ for at least one $k$ and where all of the $a_i$ are even. Fix a value $j$ such that $a_{2j+1} \neq 0$. We consider two cases, depending on the role of the second player. 

\bigskip

\noindent \textbf{\underline{Case 1}:} Suppose the Unlinker plays second. Whenever the Linker plays on a syllable, the Unlinker should respond on the same syllable by using the R2 strategy. Note that this response strategy is always possible since all of the $a_{i}$ are even. Thus, each time the Unlinker responds, two crossings can be reduced. This means that the final link diagram can be reduced to form the trivial 2-component link diagram, which is clearly splittable. Therefore, the Unlinker wins. 

\bigskip

\noindent \textbf{\underline{Case 2}:} Suppose the Linker plays second. Whenever the Unlinker plays on any $a_i$ syllable where $i\neq 2j+1$, the Linker should respond on this syllable by using the R2 strategy. Again, this strategy can be applied since each $a_i$ is even. Thus, each time the Linker responds, two crossings can be reduced. When the Unlinker plays on $a_{2j+1} \neq 0$, the Linker should respond by using the R2 strategy (which allows for two crossings to be reduced) until only two unresolved crossings remain in this subtwist region. On the last two unresolved crossings in this subtwist region, the Linker should respond by using the anti-R2 strategy.

After applying flypes and R2 moves at the end of the game, the resulting tangle word will be of the form $(0,\ldots,0,2,0,\ldots,0)$ or $(0,\ldots,0,-2,0,\ldots,0)$, where there are an even number of zeros before the 2 or $-2$. By repeatedly applying Statement~2 of Proposition~\ref{tangle eq}, we can reduce the tangle word to $(2,0,\ldots,0)$ or $(-2,0,\ldots,0)$. By repeatedly applying Statement~0 of Proposition~\ref{tangle eq}, these tangle words are equivalent to $(2)$ and $(-2)$, respectively. By Lemma~\ref{Lemma}, the Linker wins.
\end{proof}

We now present winning strategies for rational 2-component link shadows whose tangle word consists of two odd syllables. 
\begin{theorem} \label{Thm2}
Playing on the shadow of a rational 2-component link coming from a closure of the rational tangle $(a_1,a_2)$, if $a_1$ and $a_2$ are both odd, then the second player has a winning strategy.
\end{theorem}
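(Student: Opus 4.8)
The plan is to give the second player an explicit pairing strategy that wins irrespective of their assigned role. First I would record two preliminary facts. By Condition~(1) of Proposition~\ref{SI}, $a_1$ is an NSI syllable, and since $a_1$ is odd, Condition~(2) of Proposition~\ref{SI} makes $a_2$ an NSI syllable as well; thus the shadow consists of exactly two twist regions, $T_1$ coming from $a_1$ and $T_2$ coming from $a_2$ (each possibly a lone crossing), with $|a_1|+|a_2|$ crossings in all. Since $|a_1|+|a_2|$ is even, the second player makes the last move of the game.

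Next I would fix a perfect matching $M$ on the set of crossings: inside $T_1$, choose $\tfrac{|a_1|-1}{2}$ pairs and leave one crossing $u_1$ unmatched; inside $T_2$, choose $\tfrac{|a_2|-1}{2}$ pairs and leave one crossing $u_2$ unmatched; and finally match $u_1$ with $u_2$. The second player's strategy is: whenever the first player resolves a crossing $x$, respond by resolving $M(x)$, choosing its resolution as follows. If $x$ and $M(x)$ lie in the same twist region, resolve $M(x)$ with slope opposite to that of $x$, i.e. use the R2 strategy, so that this pair can be removed by a flype and two R2 moves as in Figure~\ref{R2Strat}. If $\{x,M(x)\}=\{u_1,u_2\}$, resolve the partner with slope opposite to that of $x$ when the second player is the Unlinker, and with slope equal to that of $x$ when the second player is the Linker. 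The standard pairing-strategy invariant — after each second-player move the resolved crossings form a union of complete matched pairs — shows every such response is legal (and reconfirms that the second player resolves the final crossing).

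Finally I would analyze the terminal diagram. Applying the R2 strategy to each of the $\tfrac{|a_1|-1}{2}+\tfrac{|a_2|-1}{2}$ within-region pairs removes those pairs by flypes and R2 moves, all performed locally inside $T_1$ and $T_2$; what remains is the two-syllable tangle $(c_1,c_2)$, where $c_i=\pm1$ is the slope of $u_i$. Flypes and R2 moves are link-diagram equivalences and splittability is an invariant of equivalence, so the diagram produced by the game is splittable exactly when the two-component closure of $(c_1,c_2)$ is. If the second player is the Unlinker, their rule on $\{u_1,u_2\}$ forces $c_1=-c_2$, hence $(c_1,c_2)\in\{(1,-1),(-1,1)\}$, which by Statements~4 and 5 of Proposition~\ref{tangle eq} equals $(0)$ and is splittable by Lemma~\ref{Lemma}(1); if the second player is the Linker, the rule forces $c_1=c_2$, hence $(c_1,c_2)\in\{(1,1),(-1,-1)\}$, which by Statements~4 and 5 of Proposition~\ref{tangle eq} equals $(\pm2)$ and is unsplittable by Lemma~\ref{Lemma}(2). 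Either way the second player wins.

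The step I expect to require the most care is the terminal analysis: verifying that resolving the matched pairs in $T_1$ and $T_2$ with opposite slopes really does collapse each region to the signed count of its resolutions — so the whole object genuinely becomes $(c_1,c_2)$ with $c_1,c_2\in\{\pm1\}$ — and that the various flypes, being local to subtwist regions, commute and do not disturb the rest of the diagram. The matching bookkeeping and the four-way endgame check are then routine.
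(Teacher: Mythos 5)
Your proof is correct, and its endgame coincides with the paper's: both arguments arrange matters so that, after flypes and R2 moves, each syllable collapses to a single resolved crossing, leaving a tangle of the form $(\pm1,\pm1)$ whose relative sign the second player controls, and both then finish with Statements~4 and~5 of Proposition~\ref{tangle eq} and Lemma~\ref{Lemma}. Where you genuinely differ is in how the responses are scheduled. The paper's second player mirrors adaptively: they apply the R2 strategy in whichever syllable the first player just played, and the decisive same-sign (Linker) or opposite-sign (Unlinker) response is made only at the moment the first player is forced to resolve the last crossing of one syllable --- a step the paper justifies by a parity argument using that both syllables are odd and the first player moves first. You instead fix a perfect matching in advance, with $\tfrac{|a_1|-1}{2}$ and $\tfrac{|a_2|-1}{2}$ internal pairs plus one cross-syllable pair $\{u_1,u_2\}$, and always respond on the partner, deciding the relative sign only on the cross pair. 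This buys a cleaner legality argument (the standard pairing invariant replaces the "first player must exhaust a syllable" step) at the cost of having to check that opposite-slope pairs cancel wherever they sit inside a twist region, so that each region really reduces to its signed sum; you flag this correctly, and it is the same reduction the paper uses implicitly when it simplifies mixed-sign twist regions by flypes and R2 moves. Either formulation serves equally well as the ingredient cited later in Theorems~\ref{Thm3} and~\ref{Thm4}, which only need a second-player strategy that can force $(a_1,a_2)$ down to $(0)$ or $(\pm2)$ at will.
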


\begin{proof} Assume we have a shadow of a rational 2-component link coming from a closure of the rational tangle $(a_1,a_2)$ where $a_1$ and $a_2$ are both odd. We will consider two cases, depending on the role of the second player.

\bigskip

\noindent \textbf{\underline{Case 1}:} Suppose the Linker plays second. When the Unlinker plays, the Linker should respond on the same syllable using the R2 strategy so long as this is possible. Since the Unlinker is playing first and both of the syllables are odd, then the Unlinker will necessarily have to resolve the last crossing in either $a_1$ or $a_2$. Once the Unlinker fully resolves an $a_i$, the pseudotangle word can be reduced to one of $(1,(a)),(-1,(a)),((a),1)$, or $((a),-1)$, where $a$ is odd. The Linker will then play on $a$ by resolving a crossing to have the same overcrossing slope as the previous crossing resolved by the Unlinker. Thus, the Linker will turn $(1,(a))$ into $(1,1(|a|-1))$, $(-1,(a))$ into $(-1,-1(|a|-1))$, $((a),1)$ into $(1(|a|-1),1)$, and $((a),-1)$ into $(-1(|a|-1),-1)$.  

Since $|a|-1$ is even, then the Linker should resume using the R2 strategy to respond to the Unlinker until the end of the game. This results in a rational tangle word that can be reduced to either $(1,1)$ or $(-1,-1)$, which are equivalent to $(2)$ and $(-2)$ by Statement~4 and Statement~5 of Proposition~\ref{tangle eq}, respectively. By Lemma~\ref{Lemma}, the Linker wins. 

\bigskip

\noindent \textbf{\underline{Case 2}:} Suppose the Unlinker plays second. The Unlinker's strategy will be similar to the Linker's strategy from Case 1. First, the Unlinker should use the R2 strategy until the Linker fully resolves one of the twist regions. As in the previous case, the pseudotangle word can be reduced to the form $(1,(a)),(-1,(a)),((a),1)$, or $((a),-1)$, where $a$ is odd. The Unlinker should then play on $a$ by resolving a crossing to have overcrossing slope opposite to the previous crossing resolved by the Linker. Thus, the Unlinker will turn $(1,(a))$ into $(1,-1(|a|-1))$, $(-1,(a))$ into $(-1,1(|a|-1))$, $((a),1)$ into $(-1(|a|-1),1)$, and $((a),-1)$ into $(1(|a|-1),-1)$. 

Since $|a|-1$ is even, then the Unlinker should resume using the R2 strategy to respond to the Linker until the end of the game. This results in a rational tangle word that can be reduced to either $(1,-1)$ or $(-1,1)$, both of which are equivalent to $(0)$ by Statement~4 and Statement~5 of Proposition~\ref{tangle eq}, respectively. By Lemma~\ref{Lemma}, the Unlinker wins. 
\end{proof}

The following theorem builds on the results of the previous two theorems. 

\begin{theorem} \label{Thm3}
Playing on the shadow of a rational 2-component link coming from a closure of the rational tangle $(a_1,\ldots,a_n)$ where $n\geq3$, if $a_1$ and $a_n$ are odd and all other $a_i$ are even, then the second player has a winning strategy.
\end{theorem}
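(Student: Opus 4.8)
The plan is to extend the argument in the proof of Theorem~\ref{Thm2}, with the two odd syllables $a_1$ and $a_n$ playing the roles of the two odd syllables there while the even syllables $a_2,\dots,a_{n-1}$ are neutralized by the R2 strategy. I would first record the structural facts. Repeated use of Conditions (1), (2), and (6) of Proposition~\ref{SI} shows that every syllable is an NSI syllable, so the tangle word is one maximal string of NSI syllables of shape ``odd, even, \dots, even, odd''; in particular $N:=|a_1|+\cdots+|a_n|$ is even, which is consistent with Proposition~\ref{ratevenNSI} since the closure is a $2$-component link. This parity is the engine of the proof: it guarantees the second player makes the final move, and, more usefully, the move count below pins down exactly when the first player can force an off-pattern response.

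I would then specify the second player's strategy: whenever the first player resolves a crossing in a twist region that still contains an unresolved crossing, the second player responds in that same region using the R2 strategy; the sole exception is described in the next paragraph. Tracking the parities of the numbers of unresolved crossings between the players' turns, one checks that a middle syllable $a_i$ (for $2\le i\le n-1$) always has an even number of unresolved crossings at the start of each of the first player's turns, so the first player can never exhaust it; hence every middle syllable remains available as an R2-response and is ultimately removed in pairs by flypes and R2 moves, exactly as in the proof of Theorem~\ref{Thm1}. The only way the first player can deny the second player a same-region response is to resolve the last crossing of $a_1$ or of $a_n$. A counting argument shows this occurs exactly once: if it never occurred, every crossing would be consumed inside a first-player/second-player pair within a single syllable, forcing every syllable to have even length and contradicting that $|a_1|$ is odd; and once it has occurred---say the first player exhausts $a_1$---the surviving odd syllable $a_n$ has an even number of unresolved crossings and thereafter behaves like an even syllable, so neither it nor any middle syllable can be exhausted by the first player again.

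When that unique moment occurs, the first player has just resolved the last crossing of $a_1$ with some overcrossing slope $\varepsilon_1\in\{+1,-1\}$; the second player responds by resolving a crossing of $a_n$ with a slope $\delta\in\{+1,-1\}$ of its choosing (this is legal because $a_n$ is provably nonempty at that moment), and then reverts to the R2 strategy for the remainder of the game. (If the first player instead exhausts $a_n$ first, swap the roles of $a_1$ and $a_n$.) At the end, after all flypes and R2 moves, the middle syllables vanish, $a_1$ collapses to the single crossing $\varepsilon_1$ and $a_n$ collapses to the single crossing $\delta$, so the final $2$-component link diagram is the closure of $(\varepsilon_1,0,\dots,0,\delta)$ with $n-2$ interior zeros. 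Deleting pairs of zeros via Statement~2 of Proposition~\ref{tangle eq} and absorbing any lone remaining zero via Statement~1 reduces this to $(\varepsilon_1,\delta)$ when $n$ is even and to $(\varepsilon_1+\delta)$ when $n$ is odd, and Statements~4 and~5 simplify $(\varepsilon_1,\delta)$ to $(\varepsilon_1+\delta)$ as well. Because the second player sees $\varepsilon_1$ before committing to $\delta$, the Linker sets $\delta=\varepsilon_1$ to land on $(\pm2)$ and the Unlinker sets $\delta=-\varepsilon_1$ to land on $(0)$; Lemma~\ref{Lemma} then finishes the argument in both roles.

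The step I expect to be the main obstacle is the parity bookkeeping of the second and third paragraphs: rigorously establishing that the second player's same-region R2 response is available on every turn except one, and that the single forced response must be aimed at the other odd syllable rather than at a partially resolved even syllable (the latter would introduce a second unpaired leftover crossing and break the endgame computation). I would handle this by fixing an explicit invariant on the parities of the unresolved-crossing counts of the syllables at the start of each of the first player's turns---middle syllables even, the not-yet-targeted odd syllable odd, the targeted odd syllable even once the forced response has been played---and verifying that the stated strategy preserves it; the same invariant yields that the other odd syllable is nonempty precisely when the forced response is called for.
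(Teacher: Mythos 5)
Your proposal is correct and takes essentially the same approach as the paper: respond with the R2 strategy on the even syllables and treat $(a_1,a_n)$ exactly as in Theorem~\ref{Thm2}, then reduce the final word $(\pm1,0,\ldots,0,\pm1)$ via Proposition~\ref{tangle eq} and finish with Lemma~\ref{Lemma}. Your explicit parity invariant simply spells out the bookkeeping that the paper inherits implicitly from the proof of Theorem~\ref{Thm2}.
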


\begin{proof} Assume we have a shadow of a rational 2-component link coming from a closure of the rational tangle $(a_1,\ldots,a_n)$ where $n\geq3$, where $a_1$ and $a_n$ are odd, and where all other $a_i$ are even. When the first player plays on $a_1$ or $a_n$, the second player should respond by using the same strategy as used in the proof of Theorem~\ref{Thm2}, as though the game were played on $(a_1,a_n)$, where the Linker wants to reduce $(a_1,a_n)$ to $(1, 1)$ or $(-1, -1)$ and the Unlinker wants to reduce $(a_1,a_n)$ to $(1, -1)$ or $(-1, 1)$. When the first player plays on one of the even $a_i$ syllables, the second player should respond on the same syllable by using the R2 strategy. At the end of the game, there are two cases for the resulting tangle word, depending on the role of the second player.

\bigskip

\noindent \textbf{\underline{Case 1}:} Suppose the Linker plays second. Then the resulting tangle word can be reduced to $(1,0,\ldots,0,1)$ or $(-1,0,\ldots,0,-1)$. If there are an even number of zeros, then the tangle word can be reduced to $(1,1)$ or $(-1,-1)$ by repeatedly applying Statement~2 of Proposition~\ref{tangle eq} and reduced to $(2)$ or $(-2)$ by Statement~4 and Statement~5 of Proposition~\ref{tangle eq}, respectively. If there are an odd number of zeros, then the tangle word can be reduced to $(1,0,1)$ or $(-1,0,-1)$ by repeatedly applying Statement~2 of Proposition~\ref{tangle eq} and reduced to $(2)$ or $(-2)$ by Statement~1 of Proposition~\ref{tangle eq}. By Lemma~\ref{Lemma}, the Linker wins. 

\bigskip

\noindent \textbf{\underline{Case 2}:} Suppose the Unlinker plays second. Then the resulting tangle word can be reduced to $(1,0,\ldots,0,-1)$ or $(-1,0,\ldots,0,1)$. If there are an even number of zeros, then the tangle word can be reduced to $(1,-1)$ or $(-1,1)$ by repeatedly applying Statement~2 of Proposition~\ref{tangle eq} and reduced to $(0)$ by Statement~4 and Statement~5 of Proposition~\ref{tangle eq}, respectively. If there are an odd number of zeros, then the tangle word can be reduced to $(1,0,-1)$ or $(-1,0,1)$ by repeatedly applying Statement~2 of Proposition~\ref{tangle eq} and reduced to $(0)$ by Statement~1 of Proposition~\ref{tangle eq}. By Lemma~\ref{Lemma}, the Unlinker wins. 
\end{proof}


We will now explore how the parity of the number of SIs in the rational 2-component link shadow affects which player has a winning strategy. 

\begin{theorem} \label{Thm4}
Suppose we have a shadow of a rational 2-component link coming from a closure of a rational tangle. 
\begin{enumerate}
\item[(1)] If the tangle word contains an even number of SIs, then the second player has a winning strategy.
\item[(2)] If the tangle word contains an odd number of SIs, then the first player has a winning strategy.
\end{enumerate} 
\end{theorem}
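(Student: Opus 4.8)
The plan is to push everything through the decomposition of Proposition~\ref{NSIs} together with the three block-strategies already built in Theorems~\ref{Thm1}, \ref{Thm2}, and \ref{Thm3}. Since the rational tangle closes to a $2$-component link, Proposition~\ref{ratevenNSI} says it has an even number of NSIs, so Proposition~\ref{NSIs} applies with \emph{every} maximal string of NSI syllables having one of three shapes: a single even syllable; two consecutive odd syllables; or an odd syllable, then a nonempty block of even syllables, then a final odd syllable. The arithmetic driving the theorem is that each of these shapes contains an \emph{even} number of crossings ($\mathrm{even}$, $\mathrm{odd}+\mathrm{odd}$, $\mathrm{odd}+\mathrm{even}+\mathrm{odd}$), whereas an SI syllable $a_i^{*}$ contains $|a_i|$ crossings; hence the total crossing count of the shadow is congruent mod $2$ to the number of SIs. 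So in case~(1) the second player makes the last move and in case~(2) the first player does, and I would prove that in each case the player who moves last wins regardless of role. (The degenerate family with every odd-indexed syllable equal to $0$ is already handled by Theorem~\ref{Thm1}(1) and can be set aside.)

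Next I would specify the strategy for this distinguished player. Partition the crossings into \emph{blocks}: the maximal strings of NSI syllables, and the individual SI syllables. If the opponent resolves a crossing in a string of NSI syllables, the distinguished player replies inside the same string, running the R2 strategy on a single-even string (or, as the Linker, answering one pair by the anti-R2 strategy), or the Theorem~\ref{Thm2}/\ref{Thm3} strategy on a two-odd string or on the third shape, always playing as the responder of that sub-game; if the opponent resolves an SI crossing, the distinguished player replies with any unresolved SI crossing. An easy parity invariant---the number of unresolved SI crossings is even whenever it is the non-distinguished player's turn---shows such an SI reply is always available, and a short move-count shows the block-respecting reply inside an NSI string is always available too. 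With this discipline, each NSI string can be driven to $0$ (by the R2 strategy, or by the Unlinker's version of the Theorem~\ref{Thm2}/\ref{Thm3} strategy) or, at the Linker's option, to a single clasp, i.e.\ to $(\pm2)$. The Unlinker drives \emph{every} NSI string to $0$, whereupon Proposition~\ref{tangle eq} reduces the final tangle word to $(0)$ with a self-twist region attached to one strand, whose closure is the split $2$-component unlink; the Linker instead drives the first NSI string (which exists by Condition~(1) of Proposition~\ref{SI}) to $(\pm2)$ and every other NSI string to $0$, so the resulting oriented link diagram has linking number $\pm1$ and is unsplittable. Either way the distinguished player wins, by Lemma~\ref{Lemma} together with the fact that a nonzero linking number forces unsplittability.

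For case~(2), where the winner must be the player who moves last, I would have the first player open by resolving a single crossing of an odd-length SI syllable (one exists since the number of SIs is odd); this leaves an even number of crossings and the first player then plays exactly the ``responding'' strategy above. Since that opening crossing---and every SI crossing---is a self-crossing, it never affects splittability, so the same conclusions about the final link diagram apply, and the first player wins regardless of role.

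The step I expect to be the main obstacle is verifying that ``reply in the same block'' is genuinely consistent with the forcing used inside Theorems~\ref{Thm2} and \ref{Thm3}: those arguments rely on the opponent eventually being forced to complete an odd syllable, and one must check that this still occurs when the opponent is free to interleave moves among the other blocks, and that combining these with the global move-parity never leaves the distinguished player without a legal block-respecting reply. I expect this to reduce to a careful bookkeeping of move counts within each string; once it is settled, the theorem follows by assembling the per-block outcomes.
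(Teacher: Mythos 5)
Your proposal is correct and follows essentially the same route as the paper: decompose via Proposition~\ref{NSIs}, pair SI moves with SI moves, run the responder strategies of Theorems~\ref{Thm1}--\ref{Thm3} block by block (driving one NSI string to $(\pm 2)$ as the Linker and every other string to $(0)$), and reduce the odd-SI case by an opening move on an SI. The interleaving issue you flag as the main obstacle is settled in the paper by exactly the parity bookkeeping you anticipate---each block contains an even number of crossings, so the distinguished player can always answer in the block just played---and the only cosmetic differences are that the paper turns the \emph{final} rather than the first NSI string into the clasp and finishes the Unlinker case by tangle-word reductions rather than a pure linking-number count.
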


\begin{proof} Assume we have a shadow of a rational 2-component link coming from a closure of the rational tangle $(a_1,\ldots,a_n)$. 

We begin by proving Statement~(1). Assume $(a_1,\ldots,a_n)$ contains an even number of SIs. Using Proposition~\ref{NSIs}, we can decompose $(a_1,\ldots,a_n)$ into alternating strings of NSI syllables and isolated SI syllables. Whenever the first player plays on an SI, the second player should respond by playing arbitrarily on any other SI. Note that this response strategy is always possible since the number of SIs is even.

By Proposition~\ref{ratevenNSI}, since we have a 2-component link shadow coming from a closure of a rational tangle, then the tangle must contain an even number of NSIs. Therefore, by Proposition~\ref{NSIs}, we have that a non-final or final string of NSI syllables can be either (1) a single even syllable, (2) two consecutive odd syllables, or (3) an odd syllable followed by an arbitrary nonempty string of even syllables followed by a final odd syllable. Notice that the proofs of Statement~(2) of Theorem~\ref{Thm1}, the statement of Theorem~\ref{Thm2}, and the statement of Theorem~\ref{Thm3} provide the second player with a strategy for playing on the respective type~(1), type~(2), and type~(3) strings of NSI syllables listed above. 

When the first player plays on an unresolved crossing from a non-final string of NSI syllables, the second player should respond on an unresolved crossing in the same non-final string of NSI syllables by using the strategy for when the Unlinker plays second provided by the proof of the theorem corresponding to the type of string of NSI syllables. This strategy ensures that each non-final string of NSI syllables can be reduced to the tangle subword $(0)$. 

When the first player plays on an unresolved crossing from the final string of NSI syllables, the second player should respond on an unresolved crossing in the same final string of NSI syllables by using the strategy that corresponds to their role as Linker or Unlinker from the proof of the theorem corresponding to the type of string of NSI syllables. 

Note that the second player will always be able to respond on the same non-final or final string of NSI syllables because each such string contains an even total number of crossings. We will now consider two cases, depending on the role of the second player.

\bigskip

\noindent \textbf{\underline{Case 1}:} Suppose the Unlinker plays second. Let $(a_1,\ldots,a_k)$ denote the first string of NSI syllables. By the proofs of Statement~(2) of Theorem~\ref{Thm1}, the statement of Theorem~\ref{Thm2}, and the statement of Theorem~\ref{Thm3}, we know that the Unlinker's strategy results in this string of syllables being able to be reduced to the tangle subword $(0)$. 

If the entire tangle word consists of a single string of NSI syllables, then the tangle word $(0)$ closes to form the trivial 2-component link diagram and the Unlinker wins. If the entire tangle word consists of two or more strings of NSI syllables, then the tangle word can be reduced to $(0,a_{k+1}^*,a_{k+2},\ldots,a_n)$. By Statement~3 of Proposition~\ref{tangle eq}, the syllable $a_{k+1}$ can be reduced to $0$. As a result, the entire tangle word reduces to $(0,0,a_{k+2},\ldots,a_n)$, which reduces to $(a_{k+2},\ldots,a_n)$ by Statement~2 of Proposition~\ref{tangle eq}. Since $a_{k+2}$ is the start of a string of NSI syllables, then we can iterate this process until we are left with a tangle word that is equivalent to $(0)$. By Lemma~\ref{Lemma}, the Unlinker wins.

\bigskip

\noindent \textbf{\underline{Case 2}:} Suppose the Linker plays second. Then, as indicated in the paragraph preceding Case~1, the only difference in strategy will occur in the final string of NSI syllables. Specifically, the Linker (and the Unlinker) will respond so that all of the non-final strings of NSI syllables can be reduced to the tangle subword $(0)$ and the Linker will respond so that the final string of NSI syllables can be reduced to either the tangle subword $(2)$ or the tangle subword $(-2)$. 

If there is no isolated SI syllable following this final string of NSI syllables, then the tangle word can be reduced to $(2)$ or $(-2)$ by iteratively using Statement~3 followed by Statement~2 of Proposition~\ref{tangle eq} to reduce the tangle subwords $(0, a_{2k})$ coming from non-final strings of NSI syllables followed by an SI syllable to $(0, 0)$ and remove them from the tangle word. By Lemma~\ref{Lemma}, the Linker wins. If there is an isolated SI syllable, call it $a^{*}$, following this final string of NSI syllables, then the tangle word can be reduced to $(2,a^{*})$ or $(-2,a^{*})$. In this case, a linking number argument similar to the one used in the proof of Lemma~\ref{Lemma} can be used to show that the resulting 2-component link diagram is unsplittable. Note that this is because the syllable $a^{*}$ consists of SIs that are not considered when computing the linking number. Therefore, the Linker wins. 

\bigskip

We will now prove Statement~(2). Assume $(a_1,\ldots,a_n)$ contains an odd number of SIs. Then the first player should begin by playing arbitrarily on an SI. This will effectively reduce the game to one with an even number of unresolved SIs where the first player is now playing second on this new link pseudodiagram. But then the first player can use the strategy from the proof of Statement~(1) to guarantee a win. 
\end{proof}

\subsection{The Linking-Unlinking Game for General 2-Component Link Shadows} \label{GeneralLinks}

We now expand our focus from rational 2-component link shadows to general 2-component link shadows. In particular, we will focus the majority of our attention on finding winning strategies for the Linker. First, we introduce the notion of a partial linking number that can be applied to a link pseudodiagram throughout the Linking-Unlinking Game. 

\begin{definition} \label{pseudo linking number}
The \textbf{pseudo-linking number} of an oriented 2-component link pseudodiagram is defined to be half of the sum of the crossing signs, where the sum is taken over all of the resolved non-self-intersections (NSIs) of the link pseudodiagram. Note that an oriented 2-component link shadow has a pseudo-linking number of 0 and note that the pseudo-linking number of an oriented 2-component link diagram is the linking number of the diagram.
\end{definition}

To conclude this paper, we use the pseudo-linking number, the presence of NSIs, and the parity of the number of SIs to present winning strategies for the Linking-Unlinking Game played on a large family of general 2-component link shadows. In particular, we present winning strategies for the Unlinker in a very specific pathological case and for the Linker in other cases. 

\begin{remark}
Since linking number arguments do not seem to work in the cases where there are an even (resp. odd) number of SIs in the link shadow and where the Linker plays first (resp. second), then these cases currently remain open in terms of finding winning strategies for playing the Linking-Unlinking Game on these link shadows. 
\end{remark}

\begin{theorem} \label{thm6} 
Suppose we have a shadow of a 2-component link. 
\begin{enumerate}
\item[(1)] If the shadow contains no NSIs, then the Unlinker wins. 
\item[(2)] If the shadow contains a nonzero number of NSIs and an even number of SIs, then the Linker has a winning strategy when playing second.
\item[(3)] If the shadow contains a nonzero number of NSIs and an odd number of SIs, then the Linker has a winning strategy when playing first.
\end{enumerate} 
\end{theorem}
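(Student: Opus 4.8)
The plan is to handle the three statements in sequence, reusing the linking-number machinery and the parity result (Proposition~\ref{evenNSI}) that were set up for the rational case. For Statement~(1), suppose the shadow has no NSIs. Then every crossing is an SI, so in the final link diagram the two components never cross each other; each component can be shrunk into a small disk disjoint from the other, and the diagram is splittable no matter how the crossings are resolved. Hence the Unlinker wins trivially (indeed, the ``game'' is already decided by the shadow). I would state this as a one-paragraph argument, possibly phrased via the checkerboard-coloring idea from the proof of Proposition~\ref{evenNSI}: with no NSIs, Component~2 stays entirely in the unbounded (white) region of Component~1, so the two components are separated by a circle.

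For Statement~(2), assume the shadow contains $2m > 0$ NSIs and an even number $2s$ of SIs, and the Linker plays second. The key observation is that the pseudo-linking number (Definition~\ref{pseudo linking number}) starts at $0$, changes by $\pm\tfrac12$ each time an NSI is resolved, and is unaffected by resolving SIs. So I would have the Linker use the following pairing strategy: whenever the first player (the Unlinker) resolves an SI, the Linker responds by resolving an arbitrary other SI — always possible since the number of SIs is even and they are consumed two at a time. Whenever the Unlinker resolves an NSI, say giving it crossing sign $\varepsilon \in \{+1,-1\}$, the Linker responds by resolving \emph{another NSI with the same sign} $\varepsilon$. The Linker can always do this provided at least one unresolved NSI remains whose sign is still free to be chosen as $\varepsilon$; since the NSIs come in an even number and the Linker is always responding (playing second), there is always a partner NSI available, and the sign of a not-yet-resolved crossing is the Linker's to pick. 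Under this strategy, the NSIs get partitioned into pairs, each contributing $+1$ or $-1$ (i.e. $2\cdot(\pm\tfrac12)$) to the final linking number; more importantly, the Unlinker is forced to start each NSI-pair and the Linker finishes it with a matching sign, so every pair contributes $\pm1$ rather than $0$. A little more care is needed: I want the \emph{total} to be nonzero. The cleanest way is to have the Linker always respond with sign $+1$ when possible — then all completed pairs contribute $+1$ and the final linking number is $+m \neq 0$. The only subtlety is whether the Linker can always force sign $+1$ on its chosen partner crossing: since a crossing's sign is determined by the over/under choice together with the fixed orientation and the fixed geometry of the strands, and since the Linker is choosing which unresolved NSI to play and how to resolve it, the Linker can pick any unresolved NSI and resolve it to have whichever sign it wants. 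So the final diagram has nonzero linking number, hence is unsplittable, and the Linker wins.

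For Statement~(3), assume the shadow has a nonzero number of NSIs and an odd number of SIs, and let the Linker play first. The Linker's opening move is to resolve an arbitrary SI; this leaves a link pseudodiagram with a nonzero number of NSIs and an even number of SIs, and now the Linker is effectively the second player. The Linker then applies the Statement~(2) strategy verbatim to win. I would write this as a short paragraph invoking Statement~(2) as a black box, exactly mirroring the reduction used at the end of the proof of Theorem~\ref{Thm4}.

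The main obstacle I anticipate is making the sign-control claim in Statement~(2) fully rigorous: one must verify that, at any point in the game, the Linker genuinely has the freedom to select an unresolved NSI and resolve it so that it receives a prescribed crossing sign, independent of how the surrounding crossings have been resolved. This should follow because the crossing sign at a transverse double point depends only on the orientations of the two strands and which strand is on top — data entirely local to that crossing — so the two resolutions of any NSI always yield opposite signs, and the Linker simply picks the resolution giving $+1$. The counting that the Linker, moving second, always has a partner NSI available is the other point to nail down, but it is immediate from the evenness of the NSI count together with the fact that, by Proposition~\ref{evenNSI}, the total number of NSIs is even and the Linker consumes them in pairs with the Unlinker. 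I would also double-check the degenerate scenario where the Unlinker ``wastes'' moves by resolving SIs while NSIs remain — the pairing strategy as described handles this, since SI-moves and NSI-moves are paired separately.
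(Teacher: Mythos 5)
Your Statements (1) and (3) match the paper's argument, and the overall framework for Statement (2) (pair SIs with SIs, NSIs with NSIs, track the pseudo-linking number) is the right one. But the sign rule at the heart of Statement (2) has a genuine gap. Your first proposal — the Linker always matches the Unlinker's sign on NSIs — makes each completed NSI pair contribute $\varepsilon_i=\pm1$ where the Unlinker chooses $\varepsilon_i$; with four or more NSIs the Unlinker simply alternates $+1,-1,\dots$ and forces the final linking number to be $0$, so the strategy certifies nothing. Your proposed repair — the Linker always responds with sign $+1$ — is also incorrect: a pair the Unlinker opens with $-1$ and the Linker closes with $+1$ contributes $\tfrac12(-1)+\tfrac12(+1)=0$, not $+1$, so the Unlinker can open every pair with $-1$ and again drive the total to $0$. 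Since linking number $0$ does not imply splittable, the proof does not conclude either way, and the claimed ``final linking number $+m$'' is simply false under optimal Unlinker play.

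The fix, which is what the paper does, is to treat the first NSI response differently from all later ones: on the Unlinker's first NSI move the Linker responds on another NSI with the \emph{same} sign, pushing the pseudo-linking number to $+1$ or $-1$; on every subsequent NSI move the Linker responds with the \emph{opposite} sign, so each later pair contributes $0$ and the pseudo-linking number is frozen at $\pm1$ no matter what the Unlinker does. Proposition~\ref{evenNSI} guarantees a partner NSI is always available, and the final diagram then has nonzero linking number, hence is unsplittable. Your observations that crossing signs are locally choosable by the resolver and that SI moves are absorbed by the separate SI pairing are correct and carry over unchanged; only the sign schedule needs this first-move/later-move distinction.
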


\begin{proof} We begin by proving Statement~(1). Since the 2-component link shadow contains no NSIs, then the game starts on a split 2-component link shadow and the Unlinker wins automatically because resolving crossings will never create an unsplittable link diagram.

We now prove Statement~(2). Assign an arbitrary orientation to the 2-component link shadow. If the Unlinker plays on an SI, the Linker should respond by also playing on an SI. Since the number of SIs is even, the Linker will always be able to respond on a remaining SI until all SIs are resolved. Note that the SIs do not affect the (pseudo-)linking number, so how the Linker responds is arbitrary.

Recall that the pseudo-linking number of a 2-component link shadow (or a 2-component link pseudodiagram with only SIs resolved) is 0. Thus, after the Unlinker plays for the first time on an NSI, the 2-component link pseudodiagram will have a positive pseudo-linking number of $\frac{1}{2}$ or a negative pseudo-linking number of $-\frac{1}{2}$. The Linker should then respond on any other NSI by resolving the crossing to have the same sign as the crossing resolved by the Unlinker on the previous move, which will change the pseudo-linking number to $1$ or $-1$. 

For the remaining NSIs, when the Unlinker plays on an NSI, the Linker should respond on a remaining NSI by resolving the crossing to have sign opposite to the crossing resolved by the Unlinker on the previous move. Since, by Proposition~\ref{evenNSI}, every 2-component link pseudodiagram contains an even number of NSIs, the Linker will always be able to respond on a remaining NSI until all NSIs are resolved. 

Notice that the Linker's response strategy preserves the pseudo-linking number at $1$ or $-1$ (after their first response on an NSI) for the remainder of the game. Consequently, when the Linker makes the final move, the resulting 2-component link diagram will have a nonzero linking number. This implies that the link diagram is unsplittable and, therefore, the Linker wins. 

We now prove Statement~(3). Assign an arbitrary orientation to the 2-component link shadow. The Linker should begin by playing arbitrarily on an SI. This will effectively reduce the game to one with an even number of unresolved SIs where the Linker is now playing second on this new link pseudodiagram. But then the Linker can use the strategy from the proof of Statement~(2) to guarantee a win. 
\end{proof}

\bibliography{ThesisBib} 

\bibliographystyle{plain}

\end{document}